\documentclass[twoside,11pt]{article}                                           
\usepackage{ctexheading}\ctexset{section/format+=\centering}
\usepackage{indentfirst}
\usepackage{titlesec}                                                  
 \titleformat{\section}{\centering\small}{\thesection\,.}{0em}{}           
\usepackage{amssymb}
\usepackage{amsmath}
\usepackage{cite}
\usepackage{amsthm}
\usepackage{tikz-cd}
\usepackage{amsfonts}
\usepackage{enumitem}
\usepackage{geometry}                                                      
\usepackage{graphicx}                                                      
\usepackage{titlesec} \usepackage{titletoc}                                
\usepackage{hyperref} \hypersetup{colorlinks=true, linkcolor=blue, citecolor=green,} 
\usepackage{abstract}
\usepackage{fancyhdr}

\fancypagestyle{fancy}{%
    \fancyhf{} 
    \fancyhead[LE,RO]{\footnotesize\thepage} 
    \fancyhead[CE]{\footnotesize CHENG ZHANG} 
    \fancyhead[CO]{\footnotesize A Numerical Generalized Canonical Bundle Formula}

}
\title{\textbf{\normalsize\vspace{-1.5mm}\MakeUppercase{{A Numerical Generalized Canonical Bundle Formula}}}}
\author{\small{CHENG ZHANG}}
\date{}

\newtheorem{definition}{Definition}[section]                          
\newtheorem{theorem}[definition]{Theorem}
\newtheorem{lemma}[definition]{Lemma}
\newtheorem{proposition}[definition]{Proposition}

\newtheorem{remark}[definition]{Remark}

\titlecontents{section}[1cm]{\normalsize}{\contentslabel{2em}}{\hspace{-9mm}}{\titlerule*[0.5pc]{}\contentspage\hspace*{1cm}}

\begin{document}
\pagestyle{fancy}
\maketitle

\vspace{-20mm}

\begin{abstract}
\noindent\textsc{Abstract}. We establish the generalized canonical bundle formula in the numerical setting.
\end{abstract}

\footnotetext[0]{2020 \emph{Mathematics Subject Classification}. 14E30}
\footnotetext[0]{\emph{Key words and phrases}. Generalized canonical bundle formula. numerical setting}

\rmfamily                                                             
\tableofcontents

\textsc{\section{Introduction}}

We work in the framework of generalized pairs introduced by Birkar--Zhang~\cite{BZ16}.
The canonical bundle formula plays a central role in birational geometry, particularly in inductive arguments on the dimension of varieties.
Established and developed by Kawamata~\cite{Kaw98}, Ambro~\cite{Amb04, Amb05}, and Fujino--Gongyo~\cite{FG14}, this formula was extended to the setting of generalized pairs by Filipazzi~\cite{Fil20},
and recently to the complex analytic setting~\cite{HP24, Has26}.
\par Meanwhile, numerical geometry plays an essential role in minimal model theory, especially in the setting of generalized pairs.
Indeed, central conjectures for usual pairs, such as the nonvanishing and abundance conjectures,
have analogous counterparts in the generalized pair framework only after replacing linear equivalence by numerical equivalence (see~\cite{BH14, HL18, LP20} for related discussions).
This numerical aspect of generalized pairs motivates a re-examination of classical theorems that were originally established under linear conditions.
\par In this paper, we establish a numerical analogue of Filipazzi's theorem for generalized pairs.

\vspace{2mm}

\begin{theorem}\label{n-g-cbf}
     Let $(X,B,\mathbf{M})$ be a projective g-sub-pair and $f:X\rightarrow Z$ a contraction to a normal projective variety $Z$. Suppose that\vspace{-2mm}
     \begin{itemize}
          \item $Z$ is $\mathbb{Q}$-factorial,\vspace{-3mm}
          \item $(X,B,\mathbf{M})$ is g-lc over the generic point of $Z$, and\vspace{-3mm}
          \item $K_X+B+\mathbf{M}_X\equiv_Z0$.\vspace{-2mm}
     \end{itemize}
     Then, there exists a g-sub-pair $(Z,B_Z,\mathbf{N})$ such that
     $$K_X+B+\mathbf{M}_X\equiv f^*(K_Z+B_Z+\mathbf{N}_Z).$$
     Moreover, if $(X,B,\mathbf{M})$ is a g-pair (resp. g-lc, g-klt, g-sub-lc, g-sub-klt), then so is $(Z,B_Z,\mathbf{N})$.
\end{theorem}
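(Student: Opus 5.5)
The strategy is to reduce to Filipazzi's generalized canonical bundle formula~\cite{Fil20}, which needs $K_X+B+\mathbf{M}_X\sim_{\mathbb{Q},Z}0$, by absorbing the numerically trivial discrepancy into the nef part $\mathbf{M}$. Write $L:=K_X+B+\mathbf{M}_X$, an $\mathbb{R}$-Cartier divisor with $L\equiv_Z 0$.

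\textbf{Step 1: the discrepancy is a pullback up to numerical equivalence.} Using that $Z$ is $\mathbb{Q}$-factorial and $L\equiv_Z 0$, I would show that there is an $\mathbb{R}$-Cartier divisor $D$ on $Z$ with $L\equiv f^*D$. One way is to pass to a resolution, restrict $L$ to the generic fibre (where it is numerically trivial), and then use the theorem of the base / Néron--Severi type arguments to descend. Alternatively one can first obtain $L\sim_{\mathbb{R},Z}0$ after perturbing, via the g-lc structure over the generic point and Kawamata-type abundance for numerically trivial lc log canonical divisors on the generic fibre. In that alternative, one works on a log resolution and uses~\cite{HL18}-style results for g-pairs. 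I expect this descent to be the \textbf{main obstacle}. A numerically trivial class over $Z$ need not be $\mathbb{R}$-linearly trivial over $Z$, and the difference lives in $\mathrm{Pic}^0$ of the generic fibre. This is where $\mathbb{Q}$-factoriality of $Z$ is used: divisors supported on fibres over codimension-one points, and their pushforwards, are $\mathbb{R}$-Cartier, so exceptional vertical contributions can be pulled back. The first thing I would try is to choose a Cartier $P\equiv 0$ on the generic fibre accounting for the $\mathrm{Pic}^0$ part. I would then extend $P$ to $X$ and set $L-P\sim_{\mathbb{R},Z}0$ up to vertical divisors, handling the vertical divisors using $\mathbb{Q}$-factoriality of $Z$.

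\textbf{Step 2: modify the nef part.} Having $P$ with $P\equiv 0$ on $X$, or more precisely $P\equiv_Z 0$ up to the above pullback correction, I would set $\mathbf{M}':=\mathbf{M}-\overline{P}$. Here $\overline{P}$ is the Cartier closure of $P$, which is nef because it is numerically trivial. Then $\mathbf{M}'$ is again a b-nef b-divisor descending where $\mathbf{M}$ does, and $(X,B,\mathbf{M}')$ has the same singularities as $(X,B,\mathbf{M})$, since discrepancies only see $B$ and the trace of $\mathbf{M}$ on models, and $\overline{P}$ is the pullback of a Cartier divisor. Moreover $K_X+B+\mathbf{M}'_X\sim_{\mathbb{R},Z}0$. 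An $\mathbb{R}$-version of~\cite{Fil20} (or a reduction to $\mathbb{Q}$-coefficients by a standard convex decomposition) then gives $(Z,B_Z,\mathbf{N}')$ with $K_X+B+\mathbf{M}'_X\sim_{\mathbb{R}}f^*(K_Z+B_Z+\mathbf{N}'_Z)$. Here $B_Z$ is the discriminant and $\mathbf{N}'$ is the b-nef moduli part.

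\textbf{Step 3: add back the numerical correction.} We get $L\equiv f^*(K_Z+B_Z+\mathbf{N}'_Z)+f^*D'$ with $D'\equiv$ the pushed correction. Put $\mathbf{N}:=\mathbf{N}'+\overline{D'}$, where $D'$ is numerically trivial and hence nef, so $\mathbf{N}$ is b-nef. Since $B_Z$ is the discriminant computed from $B$ alone, it is unchanged. Singularity preservation then follows exactly as in~\cite{Fil20}: lc thresholds of $f^*$ of prime divisors give $B_Z\le 1$ (resp. $<1$) when the pair is g-sub-lc (resp. g-sub-klt), and the same holds on every birational model via $\mathbf{N}$ b-nef descending. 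Effectivity of $B_Z$ follows when $B\ge0$.
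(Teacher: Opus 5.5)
You correctly identify descent as the main obstacle, but your proposal never overcomes it, so there is a genuine gap at Step 1.

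\textbf{Steps 2--3 are fine given Step 1.} Suppose $L\equiv f^*D$ with $D$ $\mathbb{R}$-Cartier on $Z$. Then $P:=L-f^*D$ is numerically trivial on all of $X$, and $\mathbf{M}-\overline{P}$ is b-nef with the same discrepancies. Also $K_X+B+(\mathbf{M}-\overline{P})_X=f^*D\sim_{\mathbb{R},Z}0$, so Theorem~\ref{g-cbf} gives $L\equiv f^*(K_Z+B_Z+\mathbf{N}'_Z)$ directly; Step 3 is not even needed.

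\textbf{Step 1 is the theorem, and none of your three suggestions proves it.}
(i) A N\'eron--Severi or theorem-of-the-base descent cannot work without using the shape of $L$. Take the first projection $E\times E\to E$ of an elliptic curve, with $F$ a fibre, $G=E\times\{p\}$ and $\Delta$ the diagonal. Then $L=\Delta-F-G$ satisfies $L\equiv_E 0$ (as $L\cdot F=0$). Yet $L\not\equiv aF$: $L\cdot G=0$ forces $a=0$, while $L\cdot\Delta=-2$.
(ii) $L\sim_{\mathbb{R},Z}0$ is simply false for g-pairs, as you note yourself. For example, take $\mathbf{M}=\overline{P}$ with $P$ a non-torsion element of $\mathrm{Pic}^0$ of an abelian variety.
(iii) The $\mathrm{Pic}^0$-route is circular. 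The closure on $X$ of a class in $\mathrm{Pic}^0(X_\eta)\otimes\mathbb{R}$ is numerically trivial on the generic fibre, but need not be nef or anti-nef on $X$. So subtracting it from $\mathbf{M}$ destroys b-nefness. Asking instead for a globally numerically trivial $P$ with $L-P\sim_{\mathbb{R},Z}0$ is exactly Step 1 again. In the example above, $L|_{X_\eta}$ is such a class, and no vertical correction $\Delta-G+cF$ of its closure is numerically trivial.
Finally, if $f$ is not equidimensional, vertical divisors mapping onto loci of codimension $\ge 2$ are not pullbacks. $\mathbb{Q}$-factoriality of $Z$ alone does not absorb them.

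\textbf{The missing idea} is to descend the two pieces of $L=(K_X+B)+\mathbf{M}_X$ separately, each with a tool that applies to it. Lehmann's descent theorem needs pseudo-effectivity, which $\mathbf{M}$ has but $L$ need not. The lc abundance statement $K+B\equiv_Z0\Rightarrow K+B\sim_{\mathbb{R},Z}0$ (Lemma~\ref{HX16, 1.6}) applies only to $K+B$. The paper does this in two cases.

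\emph{Case $\mathbf{M}_X|_{X_\eta}\equiv0$.} The paper passes to an equidimensional model $f'\colon X'\to Z'$ on which $\mathbf{M}$ descends (Proposition~\ref{trivial moduli part on a equidimensional reduction}). There $\mathbf{M}_{X'}$ is numerically equivalent over $Z$ to a vertical divisor, which is $\sim_{\mathbb{R},Z'}0$ by Lemma~\ref{vertical to very exceptional}. So Lehmann's theorem yields $\mathbf{M}_{X'}\equiv f'^*L'$. A $(K_{X'}+B_{X'})$-MMP$/Z'$ then contracts the very exceptional divisor $E'$ (Proposition~\ref{ve}), and Theorem~\ref{cbf} treats $K+B$. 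Lemma~\ref{bir.tri.pullback} then descends the result from $Z'$ to the $\mathbb{Q}$-factorial $Z$.

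\emph{Case $\mathbf{M}_X|_{X_\eta}\not\equiv0$.} Here $K_X+B$ is not pseudo-effective over $Z$, so a $(K_X+B)$-MMP$/Z$ ends with a Mori fibre space $X''\to Y$. Over $Y$, numerical triviality upgrades to $\mathbb{R}$-linear triviality by the contraction theorem. Theorem~\ref{g-cbf} then applies over $Y$, and one concludes by induction on $\dim X-\dim Z$ using Lemma~\ref{Fil20.5.1}.
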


In the case where $\mathbf{M}=0$, the theorem holds straightforwardly without the $\mathbb{Q}$-factoriality assumption (see Theorem~\ref{cbf} for a detailed proof),
as numerical triviality coincides with linear triviality after reduction to the lc case \cite{Gon11}. 
The situation becomes much more complicated when $\mathbf{M} \neq 0$. The first challenge one encounters is the descent of divisors.
Specifically, to construct the b-divisors $B_Z$ and $\mathbf{N}$, one must determine whether a divisor $K_X+B+\mathbf{M}_X$
that is relatively numerically trivial over $Z$ is numerically equivalent to a pullback from $Z$.
While this property is automatic for $\mathbb{R}$-linear equivalence, it is much more subtle for numerical equivalence. 
To address this problem, Lehmann \cite{Leh15} provided a satisfactory result for pseudo-effective $\mathbb{R}$-Cartier divisors.
However, we cannot directly apply it here, as the generalized canonical divisor may fail to be pseudo-effective. 
Instead, we incorporate Lehmann's result into Filipazzi's strategy.
Specifically, we divide the problem into two cases. If $\mathbf{M}$ is numerically trivial over the generic fiber, we use weak semistable reduction to reduce to the case
where $\mathbf{M}$ is numerically trivial over the entire base $Z$, and then apply Lehmann's theorem to $\mathbf{M}$ while simultaneously running a relative MMP to obtain a model
where $K_X+B$ is also numerically trivial over the base.
On the other hand, if $\mathbf{M}$ is not numerically trivial over the generic fiber, we proceed by induction on the relative dimension of $X\to Z$ using the structure of the Mori fiber space.
We also apply the same strategy to establish the b-nefness of $\mathbf{N}$.

\textsc{\section{Preliminaries}\label{Sec: Preliminaries}}

In this section we collect definitions, and show some important results. We will follow the standard notations and terminologies in \cite{KM98, BCHM10} and use them freely.
\vspace{2mm}

\noindent\textbf{Numerical dimension.}
Let $X$ be a normal projective variety. Let $D$ be an $\mathbb{R}$-Cartier divisor and $A$ a Cartier divisor on $X$.
Set
$$\sigma (D;A)=\text{max}\{k\in \mathbb{Z}_{\ge 0}\ |\ \limsup_{m \to \infty}\frac{h^0(X,\mathcal{O}_X(\llcorner mD\lrcorner +A))}{m^k}>0 \}$$
if $h^0(X,\mathcal{O}_X(\llcorner mD\lrcorner +A))>0$ for infinitely many $m\in\mathbb{N}^*$, and set
$$\kappa_\sigma (X,D)=\text{max}\{\sigma (D;A)\ |\ \text{A is a Cartier divisor on}\ X \}.$$
$\kappa_\sigma (X,D)$ is called the \emph{numerical dimension} of $D$.
\par Let $X\rightarrow Z$ be a projective morphism from a normal variety to a variety, and $D$ an $\mathbb{R}$-Cartier divisor on $X$.
Then the \emph{relative numerical dimension} of $D$, denoted by $\kappa_\sigma(X/Z,D)$, is defined by $\kappa_\sigma(F,D|_F)$,
where $F$ is a sufficiently general fiber of the Stein factorization of $X\rightarrow Z$.
Note that $\kappa_\sigma(F,D|_F)$ does not depend on the choice of $F$.

\vspace{2mm}

\begin{lemma}[{\cite[Lemma 2.3]{LX22}}]\label{LX22, 2.3}
     Let $X\rightarrow Z$ be a projective morphism from a normal variety to a variety, and $D$ an $\mathbb{R}$-Cartier divisor on $X$.
     Then the following statements hold.\vspace{-3mm}
     \begin{itemize}
          \item[(1)] $\kappa_\sigma(X/Z,D)=\kappa_\sigma(X/Z,D')$ for any $\mathbb{R}$-Cartier divisor $D'$ with $D\equiv_ZD'$.\vspace{-2mm}
          \item[(2)] Let $f:Y\rightarrow X$ be a surjective birational morphism.
                     Then $\kappa_\sigma(X/Z,D)=\kappa_\sigma(Y/Z,f^*D+E)$ for any $f$-exceptional $\mathbb{R}$-divisor $E\ge0$.\vspace{-2mm}
          \item[(3)] Let $g:W\rightarrow X$ be a surjective projective morphism from a normal variety $W$.
                     Then $\kappa_\sigma(X/Z,D)=\kappa_\sigma(W/Z,g^*D)$.\vspace{-2mm}
          \item[(4)] Let $\phi:X\dashrightarrow X'$ be a partial $D$-MMP/$Z$. Then $\kappa_\sigma(X/Z,D)=\kappa_\sigma(X'/Z,\phi_*D)$.
     \end{itemize}
\end{lemma}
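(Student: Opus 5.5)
Since the statement is quoted from Lehmann--Xie, the plan is to reprove its four items directly from the definition of $\kappa_\sigma$ via restriction to a general fiber. After passing to the Stein factorization and replacing $X\to Z$ by the induced map to the normalization, each item reduces to an absolute statement about a general fiber $F$. For a general fiber, $D\equiv_Z D'$ restricts to $D|_F\equiv D'|_F$. A birational morphism $Y\to X$ restricts to a birational morphism $F_Y\to F$, and the exceptional divisor $E$ restricts to an exceptional effective divisor. A surjective morphism $W\to X$ restricts to a surjective morphism of general fibers, after a further Stein factorization. A partial $D$-MMP over $Z$ restricts over the generic point to a sequence of birational contractions on $F$ which are $D|_F$-negative, or isomorphisms. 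So it suffices to prove the absolute versions for a normal projective variety $X$.

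For (2), I will note that $H^0(Y,\llcorner m(f^*D+E)\lrcorner+f^*A)$ and $H^0(X,\llcorner mD\lrcorner+A)$ agree up to the usual rounding. The projection formula and $f_*\mathcal{O}_Y(\llcorner mE + \text{frac}\lrcorner)=\mathcal{O}_X$ handle effective exceptional divisors, and any Cartier divisor on $Y$ is bounded by $f^*A-(\text{exceptional})$ for suitable $A$ on $X$. Together these give equality of the growth orders. Item (4) follows from (2) applied to a common resolution $p:W\to X$, $q:W\to X'$. By the negativity lemma, $p^*D=q^*\phi_*D+E$ with $E\ge0$ $q$-exceptional, so both sides equal $\kappa_\sigma(W,p^*D)$.

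For (3), I will use Stein factorization and (2) to reduce to two cases: $g$ generically finite, or $g$ with connected fibers. In the finite case, I will compare sections using the trace map, which makes $\mathcal{O}_X$ a direct summand of $g_*\mathcal{O}_W$ in characteristic zero. In the connected-fiber case, the projection formula gives $g_*\mathcal{O}_W(\llcorner mg^*D\lrcorner + H)\subseteq \mathcal{O}_X(\llcorner mD\lrcorner+A)\otimes g_*\mathcal{O}_W(H')$, where $g_*\mathcal{O}_W(H')$ is a torsion-free sheaf embedded in a sum of $\mathcal{O}_X(A')$. This bounds the growth from above, and the reverse inequality is immediate from pulling back sections.

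Item (1), numerical invariance, is the main obstacle, since $\kappa_\sigma$ is defined through sections. I would prove it by invoking Nakayama's characterization that $\sigma$ depends only on the numerical class. Concretely, write $D'=D+N$ with $N\equiv0$. Then $\llcorner mD'\lrcorner+A$ differs from $\llcorner mD\lrcorner+A$ by $\llcorner mN\lrcorner$ plus a bounded fractional part. Numerically trivial Cartier divisors lie in $\mathrm{Pic}^\tau$, a bounded family, so one can pick a fixed ample $H$ with $H+P$ effective, indeed very ample, for all $P\in\mathrm{Pic}^\tau$. Absorbing $H$ into $A$ then shows $\sigma(D';A+H)\ge\sigma(D;A)$. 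The delicate point is that $mN$ is only an $\mathbb{R}$-divisor, so I will write $N=\sum r_iN_i$ with $N_i$ numerically trivial $\mathbb{Q}$-Cartier. Then $\llcorner mN\lrcorner$ ranges over a bounded set modulo $\mathrm{Pic}^\tau$, which suffices.
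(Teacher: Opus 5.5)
Your proposal is correct in substance, but it takes a different route from the paper. The paper gives no argument: it imports the statement from \cite[Lemma 2.3]{LX22}, that is, from the general theory of $\kappa_\sigma$ going back to \cite{Nak-book}.

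You instead reprove all four items from the paper's definition on a normal general fiber, using only elementary inputs:
\begin{itemize}
\item Sections of $\llcorner mD\lrcorner+A$ are the rational functions $s$ with $\mathrm{div}(s)+mD+A\ge0$, so (2) is a push--pull of sections.
\item (3) is pullback of sections, plus an embedding of the torsion-free sheaf $g_*\mathcal{O}_W(H')$ into a sum of copies of some $\mathcal{O}_X(A')$.
\item (4) is (2) plus the negativity lemma.
\item (1) uses two facts. The kernel of $\mathrm{CDiv}(X)\otimes\mathbb{R}\to N^1(X)_{\mathbb{R}}$ is spanned by integral classes, so $mN$ is a numerically trivial Cartier divisor plus a bounded error. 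And there is a fixed ample $H$ with $H+P$ effective for every numerically trivial Cartier $P$ (boundedness of $\mathrm{Pic}^\tau$, or Fujita vanishing plus Castelnuovo--Mumford regularity).
\end{itemize}
Your approach gives a self-contained proof directly for normal varieties; the citation gives brevity.

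When writing it out, tighten three points.
\begin{itemize}
\item In (3), $\llcorner mD\lrcorner$ need not be $\mathbb{Q}$-Cartier, so the projection formula cannot be applied to it directly. Apply it to a Cartier approximation $\sum_j\llcorner ma_j\lrcorner D_j$ of $mD=\sum_j ma_jD_j$ (with $D_j$ Cartier), and absorb the bounded difference into $A$.
\item The trace splitting $\mathcal{O}_X\hookrightarrow g_*\mathcal{O}_W$ only gives the easy inequality. The upper bound in the finite case comes from the same torsion-free embedding argument as in the connected-fiber case. That argument works for every surjective projective $g$, so the Stein-factorization split is unnecessary.
\item A flip does not restrict to a birational contraction of general fibers, but you do not need that. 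Establish $p^*D=q^*\phi_*D+E$, with $E\ge0$ and $q$-exceptional, globally over $Z$, and then restrict to a general fiber.
\end{itemize}
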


\noindent\textbf{Nakayama-Zariski decomposition.} We adopt the definition of Nakayama-Zariski decomposition in \cite[Section 3]{LX22}, although in this paper
we deal with Nakayama-Zariski decompositions only in the absolute setting.
\par Let $X$ be a normal projective variety and $P$ a prime divisor \emph{on} $X$.
For any big $\mathbb{R}$-Cartier divisor $B$, we define
$$\sigma_P(B):=\textup{inf}\{\textup{mult}_PB'\ |\ 0\le B'\sim_\mathbb{R}B\}.$$
Let $D$ be a pseudo-effective $\mathbb{R}$-Cartier divisor on $X$ and $A$ an ample $\mathbb{R}$-divisor on $X$. We define
$$\sigma_P(D):=\textup{lim}_{\epsilon\rightarrow 0^+}\sigma_P(D+\epsilon A).$$
Note that $0\le\sigma_P(D)<+\infty$ and that it does not depend on the choice of $A$ (\cite[III, 1.5 Lemma]{Nak-book}). We define
$$N_\sigma(D):=\sum_{P:\textup{ prime divisor on $X$}}\sigma_P(D)P.$$
Then $N_\sigma(D)$ is an effective $\mathbb{R}$-divisor. Note that $N_\sigma(D)$ is not necessarily $\mathbb{R}$-Cartier. We then define the $\mathbb{R}$-divisor
$$P_\sigma(D):=D-N_\sigma(D).$$
The decomposition $D=P_\sigma(D)+N_\sigma(D)$ is called the \emph{Nakayama-Zariski decomposition} of $D$.
\par When $X$ is smooth, the definition of $N_\sigma(D)$ coincides with \cite[III, 1.12 Definition]{Nak-book}.
Moreover, \cite[Lemma 3.4(3)]{LX22} shows that this definition coincides with the one defined in \cite[\S 4]{BH14}.

\begin{lemma}\label{00}
     Let $X$ be a normal projective variety and $D$ a pseudo-effective
     $\mathbb{R}$-Cartier divisor on $X$. Assume that $N_\sigma(D)$ is $\mathbb{R}$-Cartier and $\kappa_\sigma(X,D)=0$.
     Then $D\equiv N_\sigma(D)$.
     \par In particular, if $D$ is movable (i.e., $N_\sigma(D)=0$), then $\kappa_\sigma(X,D) = 0$ if and only if $D\equiv0$.
          If additionally $D\ge0$, then $\kappa_\sigma(X,D)=0$ if and only if $D=0$.
\end{lemma}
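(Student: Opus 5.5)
The strategy is to reduce to a smooth model, where the $\sigma$-decomposition behaves well, and then use Nakayama's characterization of numerical dimension zero. Set $N:=N_\sigma(D)$ and $P:=P_\sigma(D)=D-N$. Since $D$ and $N$ are $\mathbb{R}$-Cartier, so is $P$. The aim is to show $P\equiv 0$.

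\emph{Step 1: properties of $P$.} $P$ is pseudo-effective, since it is a limit of the pseudo-effective classes $P_\sigma(D+\epsilon A)$. It is also movable, with $N_\sigma(P)=0$; this follows from the standard identity $N_\sigma(P_\sigma(D))=0$ in \cite[III, 1.8]{Nak-book}, transported to normal varieties through \cite[Section 3]{LX22}. Moreover $\kappa_\sigma(X,P)=0$. Indeed, $\kappa_\sigma(X,D)$ depends only on $P_\sigma(D)$: for $m\gg0$ every section of $\llcorner mD\lrcorner+A$ vanishes along $mN$ up to a bounded error. Hence $h^0(\llcorner mP\lrcorner+A')\ge h^0(\llcorner mD\lrcorner+A)$ for a suitable Cartier $A'$, and the reverse inequality holds up to twisting. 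So $\kappa_\sigma(X,P)=\kappa_\sigma(X,D)=0$.

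\emph{Step 2: reduce to the movable case on a resolution.} Take a resolution $g:Y\to X$ and consider $g^*P$. By Lemma~\ref{LX22, 2.3}(3) (with $Z$ a point), $\kappa_\sigma(Y,g^*P)=0$. The pullback may acquire an exceptional negative part: write $g^*P=P_\sigma(g^*P)+N_\sigma(g^*P)$. Because $N_\sigma(P)=0$ on $X$, the divisor $N_\sigma(g^*P)$ is $g$-exceptional; this holds because $\sigma_{\Gamma}$ for non-exceptional $\Gamma$ is computed on $X$. On the smooth $Y$, Nakayama's theorem \cite[V, 2.7]{Nak-book} says that a pseudo-effective divisor with $\kappa_\sigma=0$ is numerically equivalent to its negative part $N_\sigma$. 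Thus $g^*P\equiv E$, where $E\ge0$ is $g$-exceptional.

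\emph{Step 3: kill the exceptional part.} The divisor $-E\equiv -g^*P$ is $g$-nef, being numerically trivial over $X$. By the negativity lemma, $E\le0$, so $E=0$. Therefore $g^*P\equiv0$, and pushing forward via the projection formula gives $P\equiv0$, that is, $D\equiv N_\sigma(D)$.

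\emph{The "in particular" statements.} If $N_\sigma(D)=0$, then $\kappa_\sigma=0$ implies $D\equiv0$. Conversely, $D\equiv0$ gives $\kappa_\sigma=0$ by Lemma~\ref{LX22, 2.3}(1), since $\kappa_\sigma(X,0)=0$. If in addition $D\ge0$ and $D\equiv0$, then intersecting with $H^{n-1}$ for an ample $H$ gives $D=0$.

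I expect the main obstacle to be Step 1: justifying that $\kappa_\sigma(X,D)=\kappa_\sigma(X,P_\sigma(D))$ and $N_\sigma(P_\sigma(D))=0$ on a singular normal $X$, where Nakayama's results are stated for smooth varieties. If needed, I would instead run the entire argument on $Y$, using $g^*D$ and Lemma~\ref{LX22, 2.3}(2), and compare $N_\sigma(g^*D)$ with $g^*N_\sigma(D)$ plus an exceptional divisor, again finishing with the negativity lemma.
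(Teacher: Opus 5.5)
Your proof is correct and is essentially the paper's argument. The paper takes a resolution $f:Y\to X$, applies \cite[V, 1.12. Corollary]{Nak-book} directly to $f^*D$ to get $f^*D\equiv N_\sigma(f^*D)$, and pushes this down using $f_*N_\sigma(f^*D)=N_\sigma(D)$ from \cite[Lemma 3.4(3)]{LX22} together with the $\mathbb{R}$-Cartierness of $N_\sigma(D)$. That push-down is exactly where a negativity argument like your Step 3 is hidden, so the paper's proof is your proposed fallback, and the detour through $P_\sigma(D)$ in your Step 1 is unnecessary.

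If you keep the detour, only $\kappa_\sigma(X,P)\le\kappa_\sigma(X,D)$ is needed, and this is immediate from $\llcorner mP\lrcorner\le\llcorner mD\lrcorner$; your ``bounded error'' claim is unjustified and can be dropped. Also, in Step 3 it is the $g$-nefness of $E$ (true since $E\equiv_X 0$), not of $-E$, that gives $E\le 0$ via the negativity lemma.
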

     \begin{proof}
          Let $f:Y\rightarrow X$ be a resolution. By Lemma \ref{LX22, 2.3} we have $\kappa_\sigma(Y,f^*D)=\kappa_\sigma(X,D)=0$,
          and so $f^*D\equiv N_\sigma(f^*D)$ by \cite[V, 1.12. Corollary]{Nak-book} .
          Since $N_\sigma(D)$ is $\mathbb{R}$-Cartier by assumption, it follows by \cite[Lemma 3.4(3)]{LX22} that
          $$D=f_*f^*D\equiv f_*N_\sigma(f^*D)=N_\sigma(D).$$
          This proves the first statement.
          \par Assume that $D$ is movable. Then $N_\sigma(D)=0$ is $\mathbb{R}$-Cartier.
          Hence, by the above argument $\kappa_\sigma(X,D) = 0$ implies that $D\equiv N_\sigma(D)=0$.
          Conversely, if $D\equiv0$, then Lemma \ref{LX22, 2.3}(1) and the definition of $\kappa_\sigma$ immediately imply that $\kappa_\sigma(X,D)=0$.
          If additionally $D\ge0$, then $D\equiv0$ if and only if $D=0$. Thus the second statement follows.
     \end{proof}

\noindent\textbf{b-divisors.}
Let $X$ be a normal quasi-projective variety. We call $Y$ a \emph{birational model} over $X$ if there exists a birational projective morphism $Y\rightarrow X$.
\par Let $X\dashrightarrow X'$ be a birational map. For any divisorial valuation $\nu$ over $X$, we define $\nu_{X'}$ to be the center of $\nu$ on $X'$.
An \emph{$\mathbb{R}$-b-divisor} $\mathbf{D}$ over $X$ is a formal sum $\mathbf{D} = \sum_\nu r_\nu\nu$ where $\nu$ are divisorial valuations over $X$ and $r_\nu\in\mathbb{R}$,
such that $\nu_X$ is not a divisor except for finitely many $\nu$. The \emph{trace} of $\mathbf{D}$ on $X'$ is the $\mathbb{R}$-divisor
$$\mathbf{D}_{X'}:=\sum_{\nu_{i,X'}\textup{ is a divisor}}r_i\nu_{i,X'}.$$
If $\mathbf{D}_{X'}$ is $\mathbb{R}$-Cartier and $\mathbf{D}_Y$ is the pullback of $\mathbf{D}_{X'}$ on $Y$ for any birational model $Y$ of $X'$,
we say that $\mathbf{D}$ \emph{descends} to $X'$, and that $\mathbf{D}$ is the \emph{closure} of $\mathbf{D}_{X'}$, and write $\mathbf{D}=\overline{\mathbf{D}_{X'}}$.
\par Let $\mathbf{D}$ be an $\mathbb{R}$-b-divisor over $X$ such that $\mathbf{D}$ descends to some birational model $Y$ over $X$.
If $\mathbf{D}_Y$ is nef, then we say that $\mathbf{D}$ is \emph{b-nef}.
If $\mathbf{D}_Y$ is an $\mathbb{R}$(resp. $\mathbb{Q}$)-Cartier divisor, then we say that $\mathbf{D}$ is \emph{$\mathbb{R}$\textup{(resp.} $\mathbb{Q}$\textup{)}-b-Cartier}.
If $\mathbf{D}_Y$ is a Cartier divisor, then we say that $\mathbf{D}$ is \emph{b-Cartier}.
\vspace{2mm}

\noindent\textbf{Generalized pairs.}
A \emph{generalized sub-pair} (\emph{{g-sub-pair}} for short) $(X,B,\mathbf{M})$ consists of a normal quasi-projective variety $X$,
an $\mathbb{R}$-divisor $B$ on $X$, and a b-nef $\mathbb{R}$-b-divisor $\mathbf{M}$ over $X$, such that
$K_X+B+\mathbf{M}_X$ is $\mathbb{R}$-Cartier.
\par We say $(X,B,\mathbf{M})$ is \emph{projective} if $X$ is projective.
\par A g-sub-pair $(X,B,\mathbf{M})$ is called a \emph{g-pair} if $B\ge0$.

\vspace{2mm}

\noindent\textbf{Singularities.}
Let $(X,B,\mathbf{M})$ be a g-(sub-)pair. For any prime divisor $E$ and $\mathbb{R}$-divisor $D$ on $X$, denote by $\textup{mult}_ED$ the \emph{multiplicity} of $E$ along $D$.
Let $h:W\rightarrow X$ be any log resolution of $(X,\textup{Supp}B)$ such that $\mathbf{M}$ descends to $W$, and let
$$K_W+B_W+\mathbf{M}_W:=h^*(K_X+B+\mathbf{M}_X).$$
The \emph{log discrepancy} of a prime divisor $D$ on $W$ with respect to $(X,B,\mathbf{M})$ is $1-\textup{mult}_DB_W$ and it is denoted by $a(D,X,B,\mathbf{M})$.
\par We say that $(X,B,\mathbf{M})$ is \emph{g-(sub-)lc} (resp. \emph{g-(sub-)klt}) if $a(D,X,B,\mathbf{M})\ge0$ (resp. $>0$) for any log resolution $h:W\rightarrow X$
as above and any prime divisor $D$ on $W$.
\par We say that $(X,B,\mathbf{M})$ is \emph{g-dlt} if there exists a log resolution $h:W\rightarrow X$ such that $a(D,X,B,\mathbf{M})>0$ for every exceptional/$X$ divisor on $W$.
Note that by \cite{Hu25} this definition of g-dlt pairs coincides with the one in \cite{Bir19}.
\vspace{2mm}

Recall that a contraction $f:X\rightarrow Y$ is called \emph{equidimensional} if all the fibers of $f$ have the same dimension.

\begin{theorem}[Equidimensional reduction{\cite[Theorem 2.1]{LX22}}]\label{equidimensional-reduction}
     Let $(X,B)$ be a dlt pair and $\pi:X\rightarrow Z$ be a projective surjective morphism over a normal variety $Z$.
     Then there exists a commutative diagram of projective morphisms
     $$
     \begin{tikzcd}
          Y \arrow[d,swap,"\pi_Y"] \arrow[r,"f"] & X \arrow[d,"\pi"]\\
          V \arrow[r,swap,"\varphi"] & Z
     \end{tikzcd}
     $$
     such that
     \begin{itemize}
          \item $f$, $\varphi$ are birational morphisms, $\pi_Y$ is an equidimensional contraction, $Y$ is $\mathbb{Q}$-factorial and klt,
          and $V$ is a smooth quasi-projective variety, and\vspace{-2mm}
          \item there exist two $\mathbb{R}$-divisors $B_Y$ and $E$ on $Y$ such that\vspace{-2mm}
          \begin{itemize}
               \item[(a)] $K_Y+B_Y=f^*(K_X+B)+E$,\vspace{-1mm}
               \item[(b)] $B_Y\ge0$, $E\ge0$ is exceptional/$X$, and $B_Y$ and $E$ have no common components,\vspace{-1mm}
               \item[(c)] $(Y,B_Y)$ is lc,
               and the image of any lc center of $(Y,B_Y)$ on $X$ is an lc center of $(X,B)$.\vspace{-2mm}
          \end{itemize} 
     \end{itemize}
\end{theorem}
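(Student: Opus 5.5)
This statement is cited from \cite[Theorem 2.1]{LX22}, so the proof will be a construction in three stages rather than a new argument: first equidimensionalize the morphism, then repair the singularities of the source, and finally run an MMP to control the boundary.

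\emph{Equidimensional model.} By Abramovich--Karu weak semistable reduction (or its toroidal variant), there is a birational modification $V\to Z$ with $V$ smooth and quasi-projective, together with a birational model $Y_0\to X$, such that $Y_0\to V$ is an equidimensional toroidal contraction. We will replace $V$ by a smooth open-friendly resolution, using that flattening plus toroidalization keeps equidimensionality. Next, take a $\mathbb{Q}$-factorial small modification, or a dlt modification, of $Y_0$ relative to $V$. Because toroidal pairs are klt once the boundary is removed, this yields $Y$ that is $\mathbb{Q}$-factorial and klt with $Y\to V$ still equidimensional. The danger is that this modification might create fibers of the wrong dimension; to avoid it, we will use only small modifications or toroidal blowups, which preserve equidimensionality.

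\emph{Boundary.} Write $f^*(K_X+B)=K_Y+\Gamma$ and let $B_Y:=\Gamma^{\ge0}$ restricted to non-exceptional components, with the exceptional divisors given coefficient $1$ where that is appropriate. Setting $E:=K_Y+B_Y-f^*(K_X+B)$ gives property (a). The substantive point is to arrange $E\ge0$ exceptional with no common components, and for this we will run a $(K_Y+B_Y)$-MMP over $X$. By negativity, this MMP contracts exactly the components of $E$ that are not in $\mathbf{B}_-$ \cite{BCHM10}. Alternatively, we can start from the log resolution and take $B_Y=f^{-1}_*B+\sum E_i$ over $X$ while keeping lc centers in mind.

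\emph{Main obstacle.} The hardest step is reconciling the MMP over $X$ with equidimensionality over $V$. An MMP over $X$ may destroy the equidimensional structure. I would handle this by running the MMP on a toroidal model, where the steps remain toroidal and hence equidimensional over $V$, exactly as in the proof in \cite{LX22}. Property (c) will then follow from the choice that lc places of $(Y,B_Y)$ are either strict transforms of lc places or divisors with coefficient $1$ over lc centers of the dlt pair $(X,B)$.
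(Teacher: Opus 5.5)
\paragraph{A genuine gap in the boundary stage.} The paper itself gives no proof here (it cites LX22), so your proposal has to stand on its own. Your first stage is sound in substance. The Abramovich--Karu toroidalization, followed by their equidimensionality step, already gives a toroidal equidimensional $Y\to V$ with $V$ smooth and $Y$ having only quotient toroidal singularities, hence $\mathbb{Q}$-factorial and klt. So no extra small modification is needed, and a non-toroidal small modification can in fact enlarge fibers.

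The gap is in the boundary stage. Your recipe (positive part on non-exceptional components, exceptional divisors given coefficient $0$ or $1$) cannot satisfy (b) and (c) at once. Take an exceptional $E_i$ whose $\Gamma$-coefficient is $\gamma_i\in(0,1)$. If you give it $0$, its $E$-coefficient is $-\gamma_i<0$. If you give it $1$, it becomes a common component of $B_Y$ and $E$, and an lc place of $(Y,B_Y)$ whose log discrepancy for $(X,B)$ is $1-\gamma_i>0$. Then (c) fails: if $(X,B)$ is klt it has no lc centers at all.

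The proposed repair by a $(K_Y+B_Y)$-MMP over $X$ does not work. Since $K_Y+B_Y\equiv_X E$ with $E\ge0$ exceptional over $X$, negativity forces the MMP to contract \emph{every} component of $E$, not only the shared ones. It stops at a crepant model $K_{Y'}+B_{Y'}=(f')^*(K_X+B)$. Being relative to $X$ only, it may contract curves that $\pi_Y$ does not contract, so $Y'$ need not even map to $V$. For instance, if $X$ is $\mathbb{Q}$-factorial and every $f$-exceptional divisor lies in $\mathrm{Supp}\,E$, then $Y'\to X$ is small, so $Y'=X$ and the reduction is simply undone. Nothing makes these steps toroidal: $X$ carries no toroidal structure and $B_Y\neq\Sigma_Y$.

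Your justification of (c) also only considers divisorial lc places. The real issue is lc centers of $(Y,B_Y)$ of higher codimension. A priori these could lie in $\mathrm{Supp}\,E$, where $a(v,X,B)=a(v,Y,B_Y)+\mathrm{mult}_vE>0$.

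\paragraph{The fix: no MMP is needed.} Choose the toroidalization so that the toroidal boundary $\Sigma_Y$ contains $\mathrm{Supp}\,f^{-1}_*B\cup\mathrm{Exc}(f)$, and write $K_Y+\Gamma=f^*(K_X+B)$. Set $B_Y:=\Gamma^{\ge0}$, the positive part over \emph{all} components, so an exceptional divisor keeps its coefficient $\gamma_i$ when $\gamma_i>0$. Set $E:=B_Y-\Gamma$.

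Then (a) and (b) hold by construction. In particular $E$ is exceptional because the non-exceptional part $f^{-1}_*B$ of $\Gamma$ is effective. The coefficients of $B_Y$ are at most $1$ because $(X,B)$ is lc, so $B_Y\le\Sigma_Y$. Hence $(Y,B_Y)$ is lc because the toroidal pair $(Y,\Sigma_Y)$ is.

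For (c), suppose $a(v,Y,B_Y)=0$. Then $0=a(v,Y,\Sigma_Y)+\mathrm{mult}_v(\Sigma_Y-B_Y)$ with both terms nonnegative. So the center of $v$ on $Y$ lies in no component of $\Sigma_Y$ with $B_Y$-coefficient $<1$; this uses $\mathbb{Q}$-factoriality of $Y$. In particular it is not in $\mathrm{Supp}\,E$. Hence $a(v,X,B)=a(v,Y,B_Y)+\mathrm{mult}_vE=0$, and $f$ maps the lc center $c_Y(v)$ onto the lc center $c_X(v)$ of $(X,B)$.
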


\begin{lemma}\label{vertical to very exceptional}
     Let $\pi:X\rightarrow Z$ be an equidimensional contraction from a normal variety to a normal $\mathbb{Q}$-factorial variety.
     Let $D$ be a vertical over $Z$ $\mathbb{R}$-Cartier divisor on $X$.
     \par Then, there exists an effective and very exceptional/$Z$ $\mathbb{R}$-Cartier divisor $D'$ on $X$ such that $D\sim_{\mathbb{R},Z}D'$.
     If additionally $D$ is nef/$Z$, then $D\sim_{\mathbb{R},Z}0$.
\end{lemma}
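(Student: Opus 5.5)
The plan is to exploit equidimensionality: every prime divisor $P$ on $X$ that is vertical over $Z$ maps onto a prime divisor $Q=\pi(P)$ on $Z$. Indeed, since all fibers have dimension $\dim X-\dim Z$, the image $\pi(P)$ has dimension at least $\dim P-(\dim X-\dim Z)=\dim Z-1$. Because $P$ is vertical, the image is a proper closed subset, so it is exactly a prime divisor.

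Write $D=\sum_j d_jP_j$. Group the components by their images $Q_1,\dots,Q_r$. Since $Z$ is $\mathbb{Q}$-factorial, each $Q_i$ is $\mathbb{Q}$-Cartier, and $\pi^*Q_i=\sum_{\pi(P)=Q_i}m_{P}P$ with all $m_P>0$. Equidimensionality is used again here: every component of $\pi^{-1}(Q_i)$ has codimension one and dominates $Q_i$. For each $i$ set
$$t_i:=\max_{\pi(P_j)=Q_i}\frac{d_j}{m_{P_j}},\qquad D':=D-\sum_i t_i\,\pi^*Q_i .$$
Then $D'\sim_{\mathbb{R},Z}D$, and $D'$ is $\mathbb{R}$-Cartier because $D$ and each $\pi^*Q_i$ are. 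Over each $Q_i$, the coefficients of $D'$ are $d_j-t_im_{P_j}\le0$. I realize the sign convention is off, so I instead take $t_i$ to be the minimum: $t_i:=\min d_j/m_{P_j}$. Then all coefficients of $D'$ are $\ge0$, and over each $Q_i$ at least one component of $\pi^{-1}(Q_i)$ is missing from $\operatorname{Supp}D'$. Since $D'$ is vertical, it has no other components. Hence $D'\ge0$ and, for every prime divisor $Q\subset Z$, some component of $\pi^{-1}(Q)$ dominating $Q$ does not lie in $\operatorname{Supp}D'$. For $Q$ not among the $Q_i$ this holds trivially. Since $\pi$ is equidimensional, $\pi(\operatorname{Supp}D')$ has codimension $\ge1$, so $D'$ is degenerate, i.e. very exceptional/$Z$ in the sense of Nakayama/Birkar.

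For the second statement, assume that $D$ is nef$/Z$. Then $D'$ is nef$/Z$ as well, since it differs from $D$ by a pullback. It is also effective and very exceptional$/Z$, so the negativity lemma for very exceptional divisors (Birkar, \emph{Lemma 3.3} in ``Existence of log canonical flips and a special LMMP'', or \cite[III, 5.1]{Nak-book}) applied to $-D'$ gives $-D'\ge0$. This uses that $-D'$ is anti-nef over $Z$ and very exceptional. Thus $D'=0$ and $D\sim_{\mathbb{R},Z}0$.

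The main obstacle is the applicability of the negativity lemma. It is stated for $\mathbb{R}$-Cartier divisors on normal varieties with a projective morphism, and $D'$ is $\mathbb{R}$-Cartier, so this step should be fine. If needed, I would instead give a direct argument: choose a component $P_0$ of $\operatorname{Supp}D'$ over $Q$, and cut down by general hyperplanes on $Z$ to reduce to the case where $Q$ is a point on a curve base, or apply Hodge index on the surface fiber over a general point of $Q$. Then $D'|_{\pi^{-1}(q)}$ is a nonzero effective combination of fiber components missing at least one component, and it cannot be nef because of Zariski's lemma on the connected fiber. The other delicate point is that $\pi^{-1}(Q_i)$ has no non-dominating components; this follows from the fiber-dimension count above.
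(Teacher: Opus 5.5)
Your argument is correct and is essentially the paper's proof. Your $t_i$ is the negative of the paper's $t_P=\inf\{t\mid D+t\pi^*P\ge 0\}$ (read over the generic point of $P$), and after that you both apply Birkar's very-exceptional negativity lemma (Lemma 3.3 of \emph{Existence of log canonical flips and a special LMMP}) to $-D'$.

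One small fix is needed. The minimum defining $t_i$ must run over \emph{all} components of $\pi^{-1}(Q_i)$, with coefficient $0$ for those not in $\operatorname{Supp}D$. Otherwise $D'$ can acquire negative coefficients, for example when all $d_j>0$ over $Q_i$ but some component of $\pi^{-1}(Q_i)$ is not in $\operatorname{Supp}D$. The paper's $\inf$ formulation has this built in.
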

     \begin{proof}
          Since $Z$ is $\mathbb{Q}$-factorial, for any prime divisor $P\subset Z$ we can define
          $$t_P:=\textup{inf}\{t\in\mathbb{R}\ |\ D+t\pi^*P\ge0\}.$$
          Note that $t_P\ge0$.
          Moreover, since $\pi$ is equidimensional and $D$ is vertical over $Z$, the image of every component of $D$ on $Z$ is a divisor,
          therefore $t_P>0$ for only finitely many prime divisors $P\subset Z$, and $\sum_i t_{P_i}P_i$
          is a well-defined $\mathbb{R}$-divisor where $P_i$ run over all prime divisors on $Z$.
          By construction, the divisor $D':=D+\pi^*(\sum_i t_{P_i}P_i)$ satisfies the requirements in the Lemma.
          Assume additionally $D$ is nef/$Z$. By applying \cite[Lemma 3.3]{Bir12} to $-D'$ we deduce that $-D'\ge0$, thus $D'=0$.
     \end{proof}

The following proposition can be viewed as a variation of \cite[Theorem 3.4]{Bir12} and \cite[Proposition 3.8]{HL18}.
\vspace{2mm}

\begin{proposition}\label{ve}
     Let $(X,B,\mathbf{M})$ be a g-lc pair such that $X$ is $\mathbb{Q}$-factorial and klt, and $\pi:X\rightarrow Z$ a contraction to normal quasi-projective variety.
     Assume that\vspace{-2mm}
     \begin{itemize}
          \item $\kappa_\sigma(X/Z,K_X+B+\mathbf{M}_X)=0$, and\vspace{-3mm}
          \item $K_X+B+\mathbf{M}_X\equiv_ZD^h+D^v$
                where $D^h\ge0$, $D^v\ge0$ and every component of $D^h$ (resp. $D^v$) is horizontal (resp. vertical) over $Z$.\vspace{-2mm}
     \end{itemize}
     Run a $(K_X+B+\mathbf{M}_X)$-MMP/$Z$ with scaling of an ample/$Z$ divisor
     $$X:=X_0\dashrightarrow X_1\dashrightarrow\cdots\dashrightarrow X_i\dashrightarrow\cdots$$
     and let $D^h_i$ (resp. $D^v_i$) be the birational transform of $D^h$ (resp. $D^v$) on $X_i$. Then, after finitely many steps,
     the MMP arrives at a model $X_n$ on which $K_{X_n}+B_{X_n}+\mathbf{M}_{X_n}\equiv_Z D^v_n$.
     \par Moreover, if additionally $D^v$ is very exceptional/$Z$, then the MMP terminates with a model $Y$ on which
     $K_Y+B_Y+\mathbf{M}_Y\equiv_Z0$. In particular, this holds when $\pi$ is an equidimensional contraction and $Z$ is $\mathbb{Q}$-factorial.
\end{proposition}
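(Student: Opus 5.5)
The plan follows \cite[Theorem 3.4]{Bir12} and \cite[Proposition 3.8]{HL18}. We show that the horizontal part $D^h$ is contracted after finitely many steps, using that the numerical dimension is zero on the generic fiber. We then use very exceptionality to kill the vertical part.

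\textbf{Step 1: properties preserved along the MMP.} Each step of the $(K_X+B+\mathbf{M}_X)$-MMP/$Z$ is a divisorial contraction or a flip of a $\mathbb{Q}$-factorial klt variety. Hence every $X_i$ is $\mathbb{Q}$-factorial and klt, and $(X_i,B_{X_i},\mathbf{M})$ stays g-lc. Relative numerical equivalence is preserved under pushforward along the MMP: $D^h+D^v-(K_X+B+\mathbf{M}_X)$ is $\equiv_Z 0$, and $X_i$ is $\mathbb{Q}$-factorial. So
$$K_{X_i}+B_{X_i}+\mathbf{M}_{X_i}\equiv_Z D^h_i+D^v_i.$$
By Lemma \ref{LX22, 2.3}(4), $\kappa_\sigma(X_i/Z,K_{X_i}+B_{X_i}+\mathbf{M}_{X_i})=0$.

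\textbf{Step 2: contracting the horizontal part.} Pass to a general fiber $F_i$ of $X_i\to Z$, after Stein factorization. The MMP with scaling restricts over the generic point of $Z$ to an MMP with scaling for $(F,B_F,\mathbf{M}|_F)$. By the standard results on MMP with scaling for g-pairs whose generalized log canonical divisor is pseudo-effective on the generic fiber, this MMP eventually produces a model on which $(K+B+\mathbf{M})|_{F_i}$ is movable. This is the main obstacle. Termination is not known in general, so I would argue as in \cite{Bir12}: the scaling numbers $\lambda_i\to \lambda\ge0$. If $\lambda>0$, BCHM-type termination for g-pairs with a big boundary applies. If $\lambda=0$, the limit divisor is a limit of movable divisors over the generic point, hence movable there after finitely many steps, since only finitely many divisors can be contracted.

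On such a model, $D^h_n|_{F_n}\ge0$ is movable and has $\kappa_\sigma=0$, because $D^v_n|_{F_n}=0$. By Lemma \ref{00} applied on $F_n$, $D^h_n|_{F_n}=0$. Since every component of $D^h$ is horizontal, this forces $D^h_n=0$, which gives the first claim.

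\textbf{Step 3: the very exceptional case.} On $X_n$ we have $K_{X_n}+B_{X_n}+\mathbf{M}_{X_n}\equiv_Z D^v_n$, and $D^v_n$ is still very exceptional/$Z$. Continuing the MMP is then a $D^v_n$-MMP/$Z$ with scaling. By \cite[Theorem 3.4]{Bir12}, or its g-pair version \cite[Proposition 3.8]{HL18}, it contracts $D^v_n$ after finitely many steps. The argument is to apply \cite[Lemma 3.3]{Bir12} to the limit, which is nef/$Z$ on very exceptional curves. We obtain $Y$ with $K_Y+B_Y+\mathbf{M}_Y\equiv_Z0$, which is nef/$Z$, so the MMP terminates there.

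For the final sentence, first replace $D^v$ by an effective very exceptional divisor $D'\sim_{\mathbb{R},Z}D^v$ using Lemma \ref{vertical to very exceptional}. This is possible because $\pi$ is equidimensional and $Z$ is $\mathbb{Q}$-factorial, and the replacement does not change the relation $\equiv_Z D^h+D^v$.
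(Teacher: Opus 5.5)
Your proposal is correct and follows the paper's proof essentially step for step: reach a model where $(K+B+\mathbf{M})|_{F_n}$ has $N_\sigma=0$, apply Lemma \ref{00} to get $D^h_n=0$, then invoke \cite[Proposition 3.8]{HL18} on the rest of the MMP and Lemma \ref{vertical to very exceptional} for the equidimensional case. You also sketch the $\lambda_i\to\lambda$ argument for the movability step, which the paper simply asserts. The one claim you leave unjustified is that $D^v_n$ is still very exceptional/$Z$. It holds because every divisor contracted by the MMP is a component of $\textup{Supp}(D^h+D^v)$, so the vertical prime divisors that witness very exceptionality of $D^v$ are never contracted.
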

     \begin{proof}
          After finitely many steps, the MMP
          reaches a model $(X_n/Z,B_n,\mathbf{M})$ on which $N_\sigma((K_{X_n}+B_n+\mathbf{M}_{X_n})|_{F_n})=0$
          where $F_n$ is a very general fiber of the induced morphism $X_n\rightarrow Z$.
          Note that every component of $D^h_i$ (resp. $D^v_i$) remains horizontal (resp. vertical) over $Z$ for every $i\ge0$.
          We have
          $$D_n^h|_{F_n}=(D_n^h+D_n^v)|_{F_n}\equiv(K_{X_n}+B_n+\mathbf{M}_{X_n})|_{F_n}$$
          and hence
          $$N_\sigma(D_n^h|_{F_n})=N_\sigma((K_{X_n}+B_n+\mathbf{M}_{X_n})|_{F_n})=0.$$
          Additionally, by Lemma \ref{LX22, 2.3} we have
          \begin{align*}
               \kappa_\sigma(F_n,D_n^h|_{F_n})=\kappa_\sigma(F_n,(K_{X_n}+B_n+\mathbf{M}_{X_n})|_{F_n})&=\kappa_\sigma(X_n/Z,K_{X_n}+B_n+\mathbf{M}_{X_n})\\
               &=\kappa_\sigma(X/Z,K_X+B+\mathbf{M}_X)\\
               &=0.
          \end{align*}
          Thus we get $D_n^h|_{F_n}=0$ by Lemma \ref{00}, which in turn implies that $D_n^h=0$ since $D_n^h\ge0$ is horizontal over $Z$.
          Therefore we have $K_{X_n}+B_n+\mathbf{M}_{X_n}\equiv_ZD^v_n$ and the first assertion is clear.
          \par Assume moreover that $D^v$ is very exceptional/$Z$. Then,
          since the MMP only contracts the components of $D^h$ and of $D^v$ and since $D^h$ is horizontal over $Z$,
          it follows that $D^v$ remains very exceptional/$Z$ during the MMP.
          Therefore, by \cite[Proposition 3.8]{HL18},
          the MMP terminates with a model $(Y,B_Y,\mathbf{M})$ over $Z$ on which $K_Y+B_Y+\mathbf{M}_Y\equiv_Z0$.
          In particular, if $D^v$ is vertical over $Z$ and if $\pi$ is an equidimensional contraction and $Z$ is $\mathbb{Q}$-factorial,
          then by Lemma \ref{vertical to very exceptional} we can find a very exceptional/$Z$ $\mathbb{R}$-Cartier divisor $D^e\ge0$
          such that $D^v\sim_{\mathbb{R},Z}D^e$.
     \end{proof}

We close this section by recalling the $\mathbb{R}$-version of Filipazzi's result.

\begin{theorem}[{\cite[Theorem 2.23]{JLX22}}]\label{g-cbf}
     Let $(X,B,\mathbf{M})$ be a g-sub-pair and let $f:X\rightarrow Z$ be a contraction to a normal projective variety $Z$ such that\vspace{-2mm}
     \begin{itemize}
          \item $(X,B,\mathbf{M})$ is g-lc over the generic point of $Z$, and\vspace{-3mm}
          \item $K_X+B+\mathbf{M}_X\sim_{\mathbb{R},Z}0$.\vspace{-2mm}
     \end{itemize}
     Then, there is a g-sub-pair $(Z,B_Z,\mathbf{N})$ such that
     $$K_X+B+\mathbf{M}_X\sim_\mathbb{R}f^*(K_Z+B_Z+\mathbf{N}_Z).$$
     Moreover, if $(X,B,\mathbf{M})$ is a g-pair (resp. g-lc pair, g-klt pair), then so is $(Z,B_Z,\mathbf{N})$.
\end{theorem}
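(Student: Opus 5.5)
We would follow Filipazzi's construction directly, keeping track of $\mathbb{R}$-coefficients. Fix an $\mathbb{R}$-Cartier divisor $L_Z$ on $Z$ with $K_X+B+\mathbf{M}_X\sim_\mathbb{R}f^*L_Z$. For each prime divisor $P\subset Z$, let $t_P$ be the g-lc threshold of $f^*P$ with respect to $(X,B,\mathbf{M})$ over the generic point of $P$. This is well defined since $(X,B,\mathbf{M})$ is g-lc over the generic point of $Z$, and $t_P=1$ for all but finitely many $P$. Set $B_Z:=\sum_P(1-t_P)P$ and $\mathbf{N}_Z:=L_Z-K_Z-B_Z$, so that $K_X+B+\mathbf{M}_X\sim_\mathbb{R}f^*(K_Z+B_Z+\mathbf{N}_Z)$ holds by construction. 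To obtain b-divisors, repeat this on every birational model $Z'\to Z$: take a resolution $X'$ of the main component of $X\times_ZZ'$, write $K_{X'}+B'+\mathbf{M}_{X'}$ for the crepant pullback, and define $B_{Z'}$ and $\mathbf{N}_{Z'}$ in the same way using the pullback of $L_Z$. The definition is local over codimension one points and invariant under crepant pullback, so these traces are compatible under pushforward. They therefore define b-divisors $\mathbf{B}$ and $\mathbf{N}$ with $\mathbf{N}_Z$ equal to the divisor above.

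The singularity statements are then formal. If $B\ge0$, then $t_P\le1$, so $B_Z\ge0$. If $(X,B,\mathbf{M})$ is g-lc (resp. g-klt), then $t_P\ge0$ (resp. $>0$) for every divisor $P$ on every model $Z'$, which gives g-lc (resp. g-klt) of $(Z,B_Z,\mathbf{N})$, once we know that $\mathbf{N}$ descends to some model.

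The real content is that $\mathbf{N}$ is b-nef, in particular $\mathbb{R}$-b-Cartier and descending to a high model. First we reduce to $\mathbb{Q}$-coefficients. Write $\mathbf{M}=\sum r_j\mathbf{M}_j$ with $\mathbf{M}_j$ b-nef and $\mathbb{Q}$-b-Cartier, which is possible after descending to a model where $\mathbf{M}$ is nef. Then perturb the real coefficients of $B$, of the $r_j$ and of $L_Z$ inside a rational affine subspace on which the relations $K_X+B+\mathbf{M}_X\sim_{\mathbb{R}}f^*L_Z$ and the g-lc condition over the generic point are preserved. This exhibits the data as a convex combination $\sum a_k(B^{(k)},\mathbf{M}^{(k)},L^{(k)})$ of $\mathbb{Q}$-data with $K_X+B^{(k)}+\mathbf{M}^{(k)}_X\sim_\mathbb{Q}f^*L^{(k)}$. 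Since the lc thresholds along divisors are computed on a fixed log resolution, we expect them to be piecewise affine in the coefficients, so after refining the decomposition $\mathbf{B}$ and $\mathbf{N}$ are the corresponding convex combinations. The $\mathbb{Q}$-case is Filipazzi's theorem, and convex combinations of b-nef b-divisors descending to a common model are b-nef.

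If instead one wants to argue directly, we would split into two cases following Filipazzi. In the first case $\mathbf{M}$ is numerically trivial on the generic fiber. Use weak semistable reduction so that $\mathbf{M}$ descends and is $\sim_{\mathbb{R}}$ a pullback from the base. Then apply the Ambro/Fujino--Gongyo formula to $(X,B)$, so that $\mathbf{N}$ is the Ambro moduli part plus the descended $\mathbf{M}$. In the second case, run an MMP ending in a Mori fiber space $X'\to T$ over $Z$. The pair is then $\sim_{\mathbb{R}}$-trivial over $T$, and we conclude by induction on relative dimension, composing the formulas for $X'\to T$ and $T\to Z$.

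We expect the main obstacle to be the piecewise-affineness and the common descent in the perturbation step. Specifically, we must choose a single model $Z'$ where all $\mathbf{N}^{(k)}$ descend, and check that the thresholds $t_P$ along divisors over $Z$ vary affinely on each chamber.
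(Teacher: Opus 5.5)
The paper does not prove this statement. It quotes it from \cite{JLX22} and uses it as a black box, in Theorem~\ref{cbf} and, crucially, for the Mori fiber space $g\colon X''\to Y$ in Case II of Theorem~\ref{b-nef}. Judged on its own terms, your main route fails at its first step. A b-nef $\mathbb{R}$-b-divisor is in general \emph{not} a positive combination of b-nef $\mathbb{Q}$-b-Cartier b-divisors. That is exactly the NQC condition, and it is not among the hypotheses: g-pairs in this paper only require $\mathbf{M}$ to be b-nef, and Theorem~\ref{n-g-cbf} needs the statement in that generality. Nef cones can have irrational extremal rays. On $A=E\times E$ with $E$ without complex multiplication, the nef cone is round. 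A nef $\mathbb{R}$-divisor $D$ with $D^2=0$ spanning an irrational ray cannot be written as $\sum r_jM_j$ with $r_j>0$ and $M_j$ nef $\mathbb{Q}$-divisors, since each $[M_j]$ would have to lie on that ray and hence vanish. Now take $X=A\times\mathbb{P}^1\to Z=A$, $B=A\times\{0\}+A\times\{\infty\}$ and $\mathbf{M}=\overline{f^*D}$. This is an instance of the theorem that no perturbation to $\mathbb{Q}$-data can reach; restricting to a general section shows that passing to a higher model does not help. So this route proves at most the NQC case.

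Even under NQC, the step you flag is a genuine gap rather than bookkeeping. For each prime divisor $P$ over $Z$, the threshold $t_P$ is a concave, piecewise-affine function of the coefficients. However, the region around your point on which it is affine depends on $P$. There are infinitely many such $P$ on infinitely many models $Z'$, each needing its own resolution of $X\times_ZZ'$; a fixed log resolution only controls divisors on $Z$ itself. Concavity alone yields $\mathbf{B}\le\sum a_k\mathbf{B}^{(k)}$, that is, $\mathbf{N}=\sum a_k\mathbf{N}^{(k)}+\mathbf{G}$ with $\mathbf{G}\ge0$, and this says nothing about nefness. Shrinking the simplex to make finitely many coefficients affine changes the models on which the $\mathbf{N}^{(k)}$ descend. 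So the argument is circular unless you have a descent model that works uniformly for all coefficient vectors near yours, depending only on the supports. Such a model is produced inside Ambro's and Filipazzi's arguments by weak semistable reduction, but the bare $\mathbb{Q}$-statement does not provide it.

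Your fallback direct route is essentially the two-case strategy the paper follows for its numerical version, but as sketched it is circular in the Mori fiber space case. If $T\to Z$ is birational, then $X'\to T$ has the same relative dimension as $X\to Z$. Rerunning the argument on $X'\to T$, where $\rho(X'/T)=1$ and $-(K_{X'}+B')$ is ample over $T$, just returns $X'\to T$ itself. So the induction never treats a fibration on which $\mathbf{M}_{X'}$ is relatively ample. That case needs a separate proof, and in the paper it is precisely where Theorem~\ref{g-cbf} itself is invoked, so it cannot be borrowed here.
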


\textsc{\section{A numerical generalized canonical bundle formula}\label{Sec: A numerical generalized canonical bundle formula}}

This section aims to prove Theorem \ref{n-g-cbf}, starting with the special case where $\mathbf{M}=0$.
As mentioned in the introduction, the $\mathbb{Q}$-factoriality assumption is not required here.\vspace{2mm}

\begin{theorem}[Numerical canonical bundle formula]\label{cbf}
     Let $(X,B)$ be a sub-pair and let $f:X\rightarrow Z$ be a contraction to a normal variety $Z$ such that\vspace{-2mm}
     \begin{itemize}
          \item $(X,B)$ is lc over the generic point of $Z$, and\vspace{-3mm}
          \item $K_X + B\equiv_Z 0$.\vspace{-2mm}
     \end{itemize}
     Then, there exists a g-sub-pair $(Z,B_Z,\mathbf{N})$ such that
     $$K_X+B\sim_\mathbb{R}f^*(K_Z+B_Z+\mathbf{N}_Z).$$
     Moreover, if $(X,B)$ is a pair (resp. lc pair, klt pair), then $(Z,B_Z,\mathbf{N})$ is a g-pair (resp. g-lc pair, g-klt pair).
\end{theorem}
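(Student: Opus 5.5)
The plan is to upgrade the hypothesis $K_X+B\equiv_Z0$ to $K_X+B\sim_{\mathbb{R},Z}0$. Once this is done, Theorem \ref{g-cbf} with $\mathbf{M}=0$ produces the g-sub-pair $(Z,B_Z,\mathbf{N})$ together with the $\mathbb{R}$-linear equivalence and the transfer of singularities. The upgrade is the numerical-versus-linear abundance phenomenon over the generic point, and by the introduction we will invoke \cite{Gon11}. Note that Theorem \ref{g-cbf} is stated with $Z$ projective, while here $Z$ is only normal. We will either compactify $Z$ or apply the relative (quasi-projective) form of Filipazzi's construction. In that construction $B_Z$ is defined through lc thresholds of fibers over codimension-one points, and $\mathbf{N}$ as the moduli b-divisor; both are local over $Z$.

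\textbf{Step 1: reduce to an lc pair.} Take a log resolution $h:W\to X$ and write $K_W+B_W=h^*(K_X+B)$. Over the generic point of $Z$ the coefficients of $B_W$ are $\le 1$. Let $B_W'$ be obtained from $B_W$ by truncating the coefficients of the components horizontal over $Z$ at $1$. These truncations do not change anything over the generic point, so the difference $B_W-B_W'$ is vertical over $Z$. We then run a relative MMP (or pass to a dlt model over $Z$ of $(W,B_W^{\ge0,\le1})$) so that the generic fiber of the resulting pair $(X',B')\to Z$ is an lc pair with $K+B'\equiv0$ on it.

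\textbf{Step 2: numerical implies linear triviality on the generic fiber.} Gongyo's theorem \cite{Gon11} states that for a projective lc pair $(F,\Delta)$ with $K_F+\Delta\equiv0$ we have $K_F+\Delta\sim_{\mathbb{R}}0$. Applying it to the generic fiber, we get that $K_X+B$ restricted to the generic fiber is $\sim_\mathbb{R}0$. Spreading out, there is a vertical $\mathbb{R}$-divisor $G$ on $X$ with $K_X+B\sim_{\mathbb{R},Z}G$. In particular $G\equiv_Z0$.

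\textbf{Step 3: remove the vertical part.} Here the lack of $\mathbb{Q}$-factoriality of $Z$ prevents us from using Lemma \ref{vertical to very exceptional} directly. Instead we use the flexibility that Filipazzi/Ambro's construction works for sub-pairs. We replace $B$ by $B-G$: then $K_X+(B-G)\sim_{\mathbb{R},Z}0$, and since $G$ is vertical, $(X,B-G)$ is still sub-lc over the generic point of $Z$. Applying Theorem \ref{g-cbf} gives $(Z,B'_Z,\mathbf{N})$ with $K_X+B-G\sim_\mathbb{R}f^*(K_Z+B'_Z+\mathbf{N}_Z)$. This is the real subtlety: we must absorb $G$ into the base, which requires $G$ to be $\mathbb{R}$-linearly a pullback. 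We would show this by arguing that a vertical divisor that is numerically trivial over $Z$ is, over codimension-one points of $Z$, a multiple of the full fiber. Concretely, by Zariski's lemma on a general curve section over each prime divisor $P\subset Z$, $G$ agrees near the generic point of $P$ with $c_P f^*P$. Thus $G=f^*(\sum c_PP)+G_{\mathrm{exc}}$ with $G_{\mathrm{exc}}$ mapping to codimension $\ge2$.

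Alternatively, and more simply, we choose the definition of $B_Z$ via lc thresholds with respect to $B$ itself. This is Ambro's construction: $b_P=1-\mathrm{lct}$ over $P$ of $f^*P$. With this choice the difference $K_X+B-f^*(K_Z+B_Z+\mathbf{N}_Z)$ is automatically $\sim_\mathbb{R}0$ on a suitable birational model, because the moduli part is defined as the remainder, and $\mathbf{N}$ is b-nef by \cite{Amb04}/Fujino--Gongyo. The main obstacle is Step 3, i.e. making the passage from $\sim_{\mathbb{R},Z}G$ (with $G$ vertical and $\equiv_Z0$) to an honest $\mathbb{R}$-linear pullback without $\mathbb{Q}$-factoriality of $Z$. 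I would first try to do this by working on a resolution $Z'\to Z$ with an equidimensional model (Theorem \ref{equidimensional-reduction}). There Lemma \ref{vertical to very exceptional} applies and forces $G\sim_{\mathbb{R},Z'}0$ by nefness. We then push the resulting formula down to $Z$, using that $\mathbf{N}$ is a b-divisor and $B_Z$ is its trace. The final claims about pairs/lc/klt follow from the corresponding statements in Theorem \ref{g-cbf} and the fact that $G$ was removed without altering the horizontal part.
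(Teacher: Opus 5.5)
The gap is Step 3, and none of your three routes closes it. After Steps 1--2 you only know $K_X+B\sim_{\mathbb{R},Z}G$ with $G$ vertical and $G\equiv_Z0$, and you still need $G\sim_{\mathbb{R},Z}0$.

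\begin{itemize}
\item \emph{Zariski's lemma} controls $G$ only at codimension-one points of $Z$. The divisor $\sum_Pc_PP$ need not be $\mathbb{R}$-Cartier, so $f^*(\sum_Pc_PP)$ has no meaning, and $G_{\mathrm{exc}}$ is never dealt with.
\item \emph{The Ambro-style alternative} is circular. The moduli part is a ``remainder'' only once you know $K_X+B\sim_{\mathbb{R}}f^*L$ with $L$ $\mathbb{R}$-Cartier, which is exactly what is at stake.
\item \emph{The equidimensional route.} On a model $f':X'\to Z'$ over a resolution $\psi:Z'\to Z$, Lemma~\ref{vertical to very exceptional} does give $\phi^*G\sim_{\mathbb{R}}f'^*L'$ with $L'\equiv_Z0$. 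But descending $L'$ to an $\mathbb{R}$-Cartier divisor on $Z$ is Lemma~\ref{bir.tri.pullback}, which needs $Z$ to be $\mathbb{Q}$-factorial. Pushing $\mathbf{N}$ forward only produces a Weil divisor on $Z$.
\end{itemize}

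In fact the conclusion of Theorem~\ref{cbf} itself fails in this generality. Let $Z$ be the cone over an elliptic normal curve $C\subset\mathbb{P}^{d-1}$ of degree $d\ge3$, and $f:X\to Z$ the blow-up of the vertex. Let $E\cong C$ be the exceptional curve, so $E^2=-d$, $E|_E=\mathcal{O}_C(-1)$ and $K_X+E=f^*K_Z$. Let $\Gamma_q$ be the ruling over $q\in C$, and put $B=2E+\Gamma_{q_1}+\cdots+\Gamma_{q_d}$ with $\mathcal{O}_C(q_1+\cdots+q_d)\otimes\mathcal{O}_C(-1)$ non-torsion.
\begin{itemize}
\item $(X,B)$ is a pair and is lc over the generic point of $Z$.
\item $(K_X+B)\cdot E=-d+d=0$, so $K_X+B\equiv_Z0$.
\item $(K_X+B)|_E$ is non-zero in $\mathrm{Pic}(E)\otimes\mathbb{R}$, whereas $f^*D|_E$ is trivial there for every $\mathbb{R}$-Cartier $D$ on $Z$.
\end{itemize}
Your residue here is $G=E+\sum_i\Gamma_{q_i}$. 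Your $\sum_Pc_PP$ is the sum of the $d$ lines $f(\Gamma_{q_i})$, which is not $\mathbb{R}$-Cartier.

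The paper never produces a vertical residue. It reduces to $(X,B)$ being lc on all of $X$ and applies Lemma~\ref{HX16, 1.6} (the relative Hacon--Xu result, extended to $\mathbb{R}$-coefficients by Shokurov polytopes). That gives $K_X+B\sim_{\mathbb{R},Z}0$ at once, and Theorem~\ref{g-cbf} finishes. Passing to the generic fiber, as in your Steps 1--2, discards exactly the global information that lemma uses. Step 1 is unnecessary in any case, since $(X_\eta,B_\eta)$ is already lc.

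Be aware, though, that the example also defeats the paper's preliminary replacement $B\mapsto B+f^*H-f^*H'$. No such replacement can make $(X,B)$ lc, or Lemma~\ref{HX16, 1.6} would give $K_X+B\sim_{\mathbb{R},Z}0$. At best one gets a sub-lc boundary, and the lemma fails for sub-pairs, e.g.\ for $(X,E+\Gamma_{q_1}-\Gamma_{q_2})$ with $q_1-q_2$ non-torsion.

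So the paper's argument proves Theorem~\ref{cbf} when $(X,B)$ is an lc pair. When $Z$ is $\mathbb{Q}$-factorial, your third route does work:
\begin{itemize}
\item Lemma~\ref{bir.tri.pullback} gives $L'=\psi^*L$;
\item pushing forward by $\phi$ gives $G\sim_{\mathbb{R}}f^*L$;
\item Theorem~\ref{g-cbf} then applies to $(X,B)$ itself.
\end{itemize}
This is the same mechanism the paper uses for $\mathbf{M}$ in Proposition~\ref{trivial moduli part on a equidimensional reduction}.
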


The following lemma is useful:

\begin{lemma}\label{HX16, 1.6}
     Let $(X,B)$ be an lc pair and $X\to Z$ a projective morphism such that $K_X+B\equiv_Z0$. Then $K_X+B\sim_{\mathbb{R},Z}0$.
\end{lemma}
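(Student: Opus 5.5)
This is the relative version of the statement that numerically trivial log canonical divisors are $\mathbb{R}$-linearly trivial on lc pairs (Gongyo~\cite{Gon11}, and Hashizume--Hu for the relative setting). I would prove it by reducing to the absolute case over the generic point of $Z$ and then spreading out.

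\textbf{Step 1: the generic fibre.} Let $F$ be the generic fibre of $X\to Z$; it is a projective lc pair $(F,B_F)$ over the function field $K(Z)$, with $K_F+B_F\equiv 0$. By the abundance theorem for numerically trivial lc pairs (Gongyo~\cite{Gon11}, via a dlt modification and Kawamata's argument / Ambro's results), $K_F+B_F\sim_{\mathbb{R}}0$. Base change to the algebraic closure is harmless: $\mathbb{R}$-linear triviality descends, since the relevant linear systems are defined over $K(Z)$. Hence there is an $\mathbb{R}$-divisor $G$ on $X$ with $K_X+B\sim_{\mathbb{R}} G$, where $G$ is vertical over $Z$, i.e.\ $K_X+B\sim_{\mathbb{R},Z}G$ with $G$ supported over a proper closed subset.

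\textbf{Step 2: removing the vertical part.} We now have $G\equiv_Z0$ and $G$ vertical. Pass to a dlt modification $Y\to X$ of $(X,B)$; by Lemma~\ref{LX22, 2.3}-type arguments it suffices to prove the statement there, since pullbacks preserve both $\equiv_Z0$ and $\sim_{\mathbb{R},Z}0$. Next apply Theorem~\ref{equidimensional-reduction} to get $Y'\to V$ equidimensional with $V$ smooth, and replace $Z$ by $V$ and $(Y,B)$ by $(Y',B_{Y'})$. The discrepancy $E$ in (a) is exceptional over $X$, so it is harmless once we know triviality upstairs. Indeed, by the negativity lemma over $X$ one recovers $K_X+B\sim_{\mathbb{R},Z}0$ from $K_{Y'}+B_{Y'}-E\sim_{\mathbb{R},V}0$ after pushing forward.

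Over the $\mathbb{Q}$-factorial (smooth) base $V$ with equidimensional morphism, the pulled-back vertical divisor $G'$ satisfies $G'\equiv_V0$, so it is nef$/V$. Lemma~\ref{vertical to very exceptional} then gives $G'\sim_{\mathbb{R},V}0$, hence $K_{Y'}+B_{Y'}-E\sim_{\mathbb{R},V}0$.

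\textbf{Main obstacle.} The delicate point is the bookkeeping of $E$. Since $E\ge0$ is exceptional over $X$, one should instead run an MMP$/X$ on $K_{Y'}+B_{Y'}\equiv_X E$ to contract $E$. Then we use that $\sim_{\mathbb{R},V}$ descends to $\sim_{\mathbb{R},Z}$ via $\varphi$: a divisor on $Y'$ that is $\sim_{\mathbb{R}}$ to a pullback from $V$ and is itself a pullback from $X$ is $\sim_{\mathbb{R}}$ to a pullback of a divisor on $Z$. This uses connectedness of fibres of $X\to Z$ and that $V\to Z$ is birational.

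An alternative I would try first is simpler: cite Hashizume's relative result that a relatively numerically trivial lc divisor is relatively semiample. That would give the lemma directly, with the above serving as a fallback sketch.
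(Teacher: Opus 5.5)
Your Step~1 is sound; it is the same spreading-out argument the paper uses in the proof of Proposition~\ref{trivial moduli part on a equidimensional reduction}. On the equidimensional model, Lemma~\ref{vertical to very exceptional} does give $f^*(K_X+B)\sim_{\mathbb{R}}\pi_Y^*M$ for some $\mathbb{R}$-divisor $M$ on the smooth variety $V$. The divisor $E$ is not the issue: $K_{Y'}+B_{Y'}-E=f^*(K_X+B)$ exactly, so no MMP over $X$ is needed.

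\textbf{The gap is the final descent from $V$ to $Z$.} From $f^*(K_X+B)\equiv_Z0$ you only get $M\equiv_Z0$. If $Z$ is $\mathbb{Q}$-factorial, Lemma~\ref{bir.tri.pullback} gives $M=\varphi^*\varphi_*M$, and pushing forward by $f$ finishes the proof. But the lemma is stated for an arbitrary base, and is used that way in Theorem~\ref{cbf}. For such a base, the principle you invoke is false: that a divisor which is a pullback from $X$ and $\mathbb{R}$-linearly a pullback from $V$ is $\mathbb{R}$-linearly a pullback from $Z$, ``by connectedness of fibres and birationality of $V\to Z$''.

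Here is a counterexample. Let $Z$ be the projective cone over an elliptic curve $C$, and let $V$ be the blow-up of the vertex: a ruled surface $p\colon V\to C$ whose exceptional curve $C_0$ is a section. Take $X=Y'=V$ and $f=\pi_Y=\mathrm{id}$. For a non-torsion $\lambda\in\mathrm{Pic}^0(C)$, the divisor $D=p^*\lambda$ has $D\cdot C_0=0$, so $D\equiv_Z0$, and it is trivially a pullback from both $X$ and $V$. Yet $D|_{C_0}\cong\lambda$ is nonzero in $\mathrm{Pic}(C_0)\otimes\mathbb{R}$. On the other hand, $\varphi^*L|_{C_0}$ is zero there for every $\mathbb{R}$-Cartier $L$ on $Z$, since $\varphi(C_0)$ is a point. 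Hence $D\not\sim_{\mathbb{R},Z}0$.

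The deeper problem is visible when $X\to Z$ is birational. Then your Steps~1 and~2 are vacuous: every divisor is vertical, and $Y'\to V$ is an isomorphism. So the descent step is the entire lemma. It holds for $K_X+B$ only because of the lc structure over the exceptional loci. In the example, $(K_V+C_0)|_{C_0}=K_{C_0}\sim0$ by adjunction. That is exactly the non-trivial content of the relative theorem of Hacon--Xu which the paper cites, and your argument discards this information after Step~1.

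As written, your proof establishes the lemma only for $\mathbb{Q}$-factorial $Z$. There it is a genuine alternative that replaces the relative theorem by Gongyo's absolute one. This case is all that Case~I of Theorem~\ref{b-nef} actually needs, since $Z'$ is smooth there.

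Your fallback of citing a relative result is the paper's route. Because the cited statement is for $\mathbb{Q}$-divisors, the paper adds a reduction step:
\begin{itemize}
\item pass to a $\mathbb{Q}$-factorial dlt blow-up;
\item use Shokurov polytopes to write $K_X+B=\sum r_i(K_X+B_i)$ with $r_i>0$, $B_i$ rational, $(X,B_i)$ lc and $K_X+B_i$ nef over $Z$;
\item conclude that each $K_X+B_i\equiv_Z0$, hence $K_X+B_i\sim_{\mathbb{Q},Z}0$.
\end{itemize}
You would need the same step unless your reference already covers $\mathbb{R}$-divisors.
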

     \begin{proof}
          If $B$ is a $\mathbb{Q}$-divisor, then the statement is nothing but a special case of \cite[Corollary 1.6]{HX16}.
          In general, by replacing $(X,B)$ with a dlt blow-up we may assume that $(X,B)$ is $\mathbb{Q}$-factorial dlt.
          Moreover, since $K_X+B$ is nef/$Z$, by using Shokurov's polytopes
          (See \cite[Remark 3.1 and Proposition 3.2]{Bir11}) we have a decomposition
          $$K_X+B=\sum_{i=1}^kr_i(K_X+B_i)$$
          such that for every $i$,\vspace{-2mm}
          \begin{itemize}
               \item $B_i\ge0$ is a $\mathbb{Q}$-divisor,\vspace{-3mm}
               \item $(X,B_i)$ is lc,\vspace{-3mm}
               \item $K_X+B_i$ is nef/$Z$, and\vspace{-3mm}
               \item $r_i\in\mathbb{R}$, $r_i\in(0,1)$, and $\sum_{i=1}^kr_i=1$.\vspace{-2mm}
          \end{itemize}
          Since $K_X+B\equiv_Z0$ and since $r_i>0$ and $K_X+B_i$ is nef/$Z$ for every $i$, we deduce that $K_X+B_i\equiv_Z0$ for every $i$.
          Therefore, by \cite[Corollary 1.6]{HX16} again we have $K_X+B_i\sim_{\mathbb{Q},Z}0$ for every $i$,
          and so $K_X+B\sim_{\mathbb{R},Z}0$.
     \end{proof}

     \begin{proof}[Proof of Theorem \ref{cbf}]
          Since $(X,B)$ is lc over the generic point of $Z$,
          possibly by replacing $B$ with $B+f^*H-f^*H'$ for some general ample divisors $H$ and $H'$ on $Z$, we can assume that $(X,B)$ is lc.
          By Lemma \ref{HX16, 1.6} we have $K_X + B\sim_{\mathbb{R},Z} 0$, hence the statement follows from Theorem \ref{g-cbf}.
     \end{proof}

We now turn to the general case and begin with some definitions.\vspace{2mm}

\noindent\textbf{Discrepancy b-divisor.} Let $(X,B,\mathbf{M})$ be a g-sub-pair. We define the b-divisors $\mathbf{A}(X,B,\mathbf{M})$
and $\mathbf{A}^*(X,B,\mathbf{M})$ in the following way: for any birational morphism $f : Y\rightarrow X$ from a normal variety $Y$, we define
$$\mathbf{A}(X,B,\mathbf{M})_Y:=K_Y+\mathbf{M}_Y-f^*(K_X+B+\mathbf{M}_X),\textup{ and }$$
$$\mathbf{A}^*(X,B,\mathbf{M})_Y:=\mathbf{A}(X,B,\mathbf{M})_Y^{>-1}.$$
The b-divisor $\mathbf{A}(X,B,\mathbf{M})$ is called the \emph{discrepancy b-divisor} of $(X,B,\mathbf{M})$.\vspace{2mm}

\begin{definition}[Numerical g-(sub-)lc trivial fibrations]
     \textup{
     A \emph{numerical g-sub-lc trivial fibration} $(f:X\rightarrow Z,B,\mathbf{M})$ consists of\vspace{-2mm}
     \begin{itemize}
          \item[(1)] a g-sub-pair $(X,B,\mathbf{M})$ and a contraction $f:X\rightarrow Z$ to a normal $\mathbb{Q}$-factorial variety $Z$;\vspace{-3mm}
          \item[(2)] $(X,B,\mathbf{M})$ is g-sub-lc over the generic point of $Z$;\vspace{-3mm}
          \item[(3)] $\textup{rank}f_*\mathcal{O}_X(\left\lceil \mathbf{A}^*(X,B,\mathbf{M})\right\rceil)=1$;\vspace{-3mm}
          \item[(4)] $K_X+B+\mathbf{M}_X\equiv_Z0$.\vspace{-2mm}
     \end{itemize}
     If additionally\vspace{-2mm}
     \begin{itemize}
          \item[(5)] $(X,B,\mathbf{M})$ is g-lc over the generic point of $Z$,\vspace{-2mm}
     \end{itemize}
     then we call it a \emph{numerical g-lc trivial fibration}.}
     \par\textup{We say $(f:X\rightarrow Z,B,\mathbf{M})$ is \emph{projective} if $X$ and $Z$ are projective.}
\end{definition}

     Note that if $B$ is effective over the generic point of $Z$, then $\mathcal{O}_X(\left\lceil \mathbf{A}^*(X,B,\mathbf{M})\right\rceil)=\mathcal{O}_X$.
     Hence for a numerical g-lc trivial fibration the condition (3) is automatically satisfied.
     
\begin{definition}[Discriminant and moduli parts. (cf.{\cite[3.4]{FG14}})]\label{dri.mod. part}
\textup{
     Let $(f:X\rightarrow Z,B,\mathbf{M})$ be a numerical g-sub-lc trivial fibration.
     In the following, we fix a choice of $K_X$ and a choice of $K_Z$, and assume that for any birational
     morphism $\phi: X' \rightarrow X$ and $\psi : Z'\rightarrow Z$, $K_{X'}$ and $K_{Z'}$ are chosen as the Weil divisors
     such that $\phi_*K_X' = K_X$ and $\psi_*K_{Z'} = K_Z$.
}
     \par\textup{Let $P$ be a prime divisor on $Z$. By shrinking $Z$ around the generic point of $P$ we may assume that $P$ is Cartier. We set
     $$b_P:=\textup{sup}\{t\in\mathbb{R}\ |\ (X,B+tf^*P,\mathbf{M})\textup{ is g-sub-lc over the generic point of $P$}\}$$
     and
     $$B_Z:=\sum_P (1-b_P)P$$
     where $P$ runs over all prime divisors on $Z$. Then, $B_Z$ is a well-defined $\mathbb{R}$-divisor on $Z$
     since $b_P = 1$ for all but a finite number of prime divisors (cf. \cite[Remark 3.1.3]{Amb99}).
     Assume that
     \begin{itemize}
          \item[($\ast$)] there exists an $\mathbb{R}$-Cartier divisor $L$ on $Z$ such that $K_X+B+\mathbf{M}_X\equiv f^*L$.
     \end{itemize}
     Then we set
     $$N_Z:=L-(K_Z+B_Z).$$
     Note that by the construction, $B_Z$ is uniquely determined by $(X,B,\mathbf{M})$, $N_Z$ is determined up to numerical equivalence,
     and $K_X+B+\mathbf{M}_X\equiv f^*(K_Z+B_Z+N_Z)$.
     We call $B_Z$ and $N_Z$ the \emph{pre-discriminant part} and the \emph{pre-moduli part} of $(f:X\rightarrow Z,B,\mathbf{M})$, respectively.
}
     \par \textup{Let $f':X'\rightarrow Z'$ be any contraction with birational projective morphisms $\phi: X' \rightarrow X$ and $\psi : Z'\rightarrow Z$
     (such constructions exist, for example, we can take $Z'\rightarrow Z$ as a resolution of $Z$ and then take
     $X'$ to be a resolution of the main component of $X\times_ZZ'$ which dominates $Z'$).
     Set $K_{X'}+B'+\mathbf{M}_{X'}:=\phi^*(K_X+B+\mathbf{M}_X)$. Then $(f':X'\rightarrow Z',B',\mathbf{M})$ is a numerical g-sub-lc trivial fibration
     induced by $(f:X\rightarrow Z,B,\mathbf{M})$ (cf.\cite[3.3]{FG14}). Set $L':=\psi^*L$, then we can define $B_{Z'}$ and $N_{Z'}$ as above.
     Since $\psi:Z'\rightarrow Z$ is an isomorphism over the codimension one points of $Z$, by Remark \ref{rem. crepant}(3) we have $\psi_*B_{Z'}=B_Z$, and hence $\psi_*N_{Z'}=N_Z$.
     In this way, we obtain the $\mathbb{R}$-b-divisors $\mathbf{B^Z}$ and $\mathbf{N}$ on $Z$, called the \emph{discriminant part}
     and the \emph{moduli part} of $(f:X\rightarrow Z,B,\mathbf{M})$, respectively.
     Note that the induced triplet $(Z,B_Z,\mathbf{N})$ is not yet a g-sub-pair as $\mathbf{N}$ is not necessarily b-nef.
}
\end{definition}

\begin{remark}\label{rem. crepant}
     \textup{Under the assumption and notation in Definition \ref{dri.mod. part}, we note the following facts:}
     \par\textup{(1) The definition of numerical g-sub-lc trivial fibrations is parallel to that of usual g-lc trivial fibrations,
     except that we further need $Z$ to be $\mathbb{Q}$-factorial.
     According to the constructions above, if a numerical g-sub-lc trivial fibration $(f:X\rightarrow Z,B,\mathbf{M})$ is also a g-lc trivial fibration
     (i.e. $K_X+B+\mathbf{M}_X\sim_{\mathbb{R},Z}0)$, and $B_Z'$, $N_Z'$ are its pre-discriminant part and pre-moduli part, respectively, then $B_Z=B_Z'$ and $N_Z\equiv N_Z'$.
     }
     \par\textup{(2) By construction, one can see that if $B$ is effective, then so is $B_Z$. In particular, suppose that $\mathbf{N}$ is b-nef,
     then $(Z,B_Z,\mathbf{N})$ is a g-pair if $(X,B,\mathbf{M}$) is a g-pair.
     Moreover, we claim that $(Z,B_Z,\mathbf{N})$ is g-lc (resp. g-klt, g-sub-lc, g-sub-klt)
     if $(X,B,\mathbf{M}$) is g-lc (resp. g-klt, g-sub-lc, g-sub-klt).
     We may assume that $\psi : Z'\rightarrow Z$ is a log resolution of $(Z,B_Z,\mathbf{N})$. Since $K_Z+B_Z+N_Z=L$
     and $K_{Z'}+B_{Z'}+N_{Z'}=\psi^*L$, it follows that $K_{Z'}+B_{Z'}+N_{Z'}=\psi^*(K_Z+B_Z+N_Z)$.
     As mentioned in Definition \ref{dri.mod. part}, we have
     $$B_{Z'}=\sum_{P'} (1-b_{P'})P'.$$
     where $P'$ runs over all prime divisors on $Z'$ and
     $$b_{P'}=\textup{sup}\{t\in\mathbb{R}\ |\ (X',B'+t(f')^*P',\mathbf{M})\textup{ is g-sub-lc over the generic point of $P'$}\}.$$
     If $(X,B,\mathbf{M})$ is g-lc (resp. g-klt, g-sub-lc, g-sub-klt),
     then $(X',B',\mathbf{M})$ is g-sub-lc (resp. g-sub-klt, g-sub-lc, g-sub-klt),
     which in turn implies $b_{P'}\ge0$ (resp. $b_{P'}>0$, $b_{P'}\ge0$, $b_{P'}>0$) for any prime divisor $P'$ on $Z'$.
     Therefore the coefficients of $B_{Z'}$ belong to $(-\infty,1]$ (resp. $(-\infty,1)$, $(-\infty,1]$, $(-\infty,1)$), and the claim follows.}
     \par\textup{(3) Let $(f:X\rightarrow Z,B,\mathbf{M})$ be a numerical g-sub-lc trivial fibration, and let $g:W\rightarrow X$
     be a birational model over $X$. Write $K_W+B_W+\mathbf{M}_W=g^*(K_X+B+\mathbf{M}_X)$.
     Then $(f\circ g:W\rightarrow Z,B_W,\mathbf{M})$ is a numerical g-sub-lc trivial fibration and $K_W+B_W+\mathbf{M}_W\equiv(f\circ g)^*L$.
     Since the definition of $B_Z$ is divisorial, the discriminant part and the moduli part of $(f\circ g:W\rightarrow Z,B_W,\mathbf{M})$
     are the same as the discriminant part and the moduli part of $(f:X\rightarrow Z,B,\mathbf{M})$. See also \cite[Remark 3.1.2]{Amb99}.}
     \par\textup{(4) As indicated in the last paragraph of Definition \ref{dri.mod. part}, when studying properties of $\mathbf{N}$ (such as b-nefness, b-semi-ampleness, etc.),
     we can freely replace $X\rightarrow Z$ with a fibration induced by generically finite base change. See also \cite[Remark 4.3]{Fil20}.} 
     \par\textup{(5) If $(f:X\rightarrow Z,B,\mathbf{M})$ is a numerical g-(sub-)lc trivial fibration and $G$ an $\mathbb{R}$-divisor on $Z$,
     then $(f:X\rightarrow Z,B+f^*G,\mathbf{M})$ is still a numerical g-(sub-)lc trivial fibration,
     with pre-discriminant part $B_Z+G$ and pre-moduli part $N_Z$ (by replacing $L$ with $L+G$).}
\end{remark}

\begin{lemma}\label{bir.tri.pullback}
     Let $f:X\rightarrow Z$ be a projective morphism between normal projective varieties such that\vspace{-2mm}
     \begin{itemize}
          \item $f$ is birational, and\vspace{-3mm}
          \item $Z$ is $\mathbb{Q}$-factorial.\vspace{-2mm}
     \end{itemize} 
     Suppose that $D$ is an $\mathbb{R}$-Cartier divisor on $X$ such that $D\equiv_Z0$. Then $D=f^*L$ for some $\mathbb{R}$-Cartier divisor $L$ on $Z$.
\end{lemma}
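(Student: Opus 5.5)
The natural candidate for $L$ is $f_*D$. Since $Z$ is $\mathbb{Q}$-factorial, $f_*D$ is $\mathbb{R}$-Cartier: write $f_*D=\sum a_iP_i$ with each $P_i$ $\mathbb{Q}$-Cartier. So $L:=f_*D$ is an $\mathbb{R}$-Cartier divisor on $Z$, and $f^*L$ makes sense. Consider
$$E:=D-f^*L.$$
Then $f_*E=f_*D-f_*f^*L=0$, because $f$ is birational and hence $f_*f^*L=L$. So $E$ is an $\mathbb{R}$-Cartier divisor on $X$ whose support is contained in the $f$-exceptional locus, and $E\equiv_Z0$, since $D\equiv_Z0$ and $f^*L\equiv_Z0$. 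It remains to show $E=0$.

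For this I would use the negativity lemma in the form of \cite[Lemma 3.3]{Bir12}, as in the proof of Lemma \ref{vertical to very exceptional}. Since $E$ is exceptional$/Z$, it is in particular very exceptional$/Z$ for the birational morphism $f$: every component has image of codimension $\ge 2$, so the condition on components over codimension-one points of $Z$ is vacuous. Apply the negativity lemma to $E$, which is nef$/Z$ because $E\equiv_Z0$. We get $-E\ge0$. Applying it likewise to $-E$, also nef$/Z$ and exceptional, gives $E\ge0$. Hence $E=0$ and $D=f^*L$.

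The only step needing care is the applicability of the negativity lemma: that \cite[Lemma 3.3]{Bir12} (or the classical \cite[Lemma 3.39]{KM98}) holds for $\mathbb{R}$-Cartier divisors on normal, not necessarily $\mathbb{Q}$-factorial, $X$ with a birational projective $f$. Its hypothesis is that $-E$ is nef$/Z$ and $f_*E\ge0$ (here $f_*E=0$), and these are exactly what we have. Since $Z$ is projective, $f$ is projective, and everything is over $Z$, no further reduction is needed. The $\mathbb{Q}$-factoriality of $Z$ is used only to ensure that $f_*D$ is $\mathbb{R}$-Cartier, which is where the hypothesis is essential.
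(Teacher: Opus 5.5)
Your proposal is correct and matches the paper's proof: set $L=f_*D$, which is $\mathbb{R}$-Cartier because $Z$ is $\mathbb{Q}$-factorial. Then the difference between $D$ and $f^*L$ is $f$-exceptional and numerically trivial over $Z$, so it vanishes by the negativity lemma. Applying the lemma in both directions, as you do, is just the explicit form of the paper's one-line appeal to negativity.
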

     \begin{proof}
          Since $Z$ is $\mathbb{Q}$-factorial, we can set $L:=f_*D$ and $E:=f^*L-D$. Then $E$ is exceptional over $Z$,
          and by the negativity lemma we see that $E=0$.
     \end{proof}

\begin{proposition}\label{trivial moduli part on a equidimensional reduction}
     Let $(X,B,\mathbf{M})$ be a projective $\mathbb{Q}$-factorial g-lc pair,
     and let $f:X\rightarrow Z$ be a contraction morphism to a normal projective variety $Z$ such that\vspace{-2mm}
     \begin{itemize}
          \item $K_X+B+\mathbf{M}_X\equiv_Z0$, and\vspace{-3mm}
          \item $\mathbf{M}_X|_{X_\eta}\equiv0$ where $X_\eta$ is the generic fiber of $f$.\vspace{-2mm}
     \end{itemize}
     Then, there is a commutative diagram of projective morphisms
      $$
          \begin{tikzcd}
          X' \arrow[d,swap,"f'"] \arrow[r,"\phi"] & X \arrow[d,"f"]\\
          Z' \arrow[r,swap,"\psi"] & Z
          \end{tikzcd}
     $$
     \begin{itemize}
          \item $\phi$, $\psi$ are birational, $f'$ is an equidimensional contraction, $X'$ is $\mathbb{Q}$-factorial and klt,
           $\mathbf{M}$ descends to $X'$, $Z'$ is a smooth projective variety, and\vspace{-3mm}
          \item there exist two $\mathbb{R}$-divisors $B_{X'}$ and $E'$ on $X'$ such that\vspace{-2mm}
               \begin{itemize}
                    \item[-] $K_{X'}+B_{X'}+\mathbf{M}_{X'}=\phi^*(K_X+B+\mathbf{M}_X)+E'$,\vspace{-1mm}
                    \item[-] $B_{X'}\ge0$, $E'\ge0$ is exceptional over $X$, $B_{X'}$ and $E'$ have no common components,\vspace{-1mm}
                    \item[-] $(X',B_{X'},\mathbf{M})$ is g-lc, and the image of any g-lc center of $(X',B_{X'},\mathbf{M})$ on $X$ is a g-lc center of $(X,B,\mathbf{M})$, and\vspace{-1mm}
                    \item[-] $\mathbf{M}_{X'}\equiv f'^*L'$ for some $\mathbb{R}$-Cartier divisor $L'$ on $Z'$, $K_{X'}+B_{X'}\equiv_{Z'}E'$, and $\kappa_\sigma(X'/Z',K_{X'}+B_{X'})=0$.\vspace{-1mm}
               \end{itemize} 
     \end{itemize}
\end{proposition}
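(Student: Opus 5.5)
First I pass to a log resolution on which $\mathbf{M}$ descends, and then apply Theorem \ref{equidimensional-reduction}. Concretely, let $h:W\to X$ be a log resolution of $(X,\mathrm{Supp}B)$ on which $\mathbf{M}$ descends. Write
$$K_W+B_W+\mathbf{M}_W=h^*(K_X+B+\mathbf{M}_X)+E_W,$$
where $B_W\ge0$ is $h_*^{-1}B$ plus the reduced exceptional divisor and $E_W\ge0$ is exceptional. Then $(W,B_W)$ is log smooth and dlt. I apply Theorem \ref{equidimensional-reduction} to $(W,B_W)\to Z$. This produces $X'\to W$ and an equidimensional contraction $f':X'\to Z'$ with $Z'$ smooth (compactified to be projective, after shrinking or compactifying as needed), $X'$ $\mathbb{Q}$-factorial klt, and
$$K_{X'}+B_{X'}=(\text{pullback})^*(K_W+B_W)+E.$$
Since $\mathbf{M}$ descends to $W$, it also descends to $X'$. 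Composing the two discrepancy formulas gives $K_{X'}+B_{X'}+\mathbf{M}_{X'}=\phi^*(K_X+B+\mathbf{M}_X)+E'$ with $E'\ge0$ exceptional over $X$. If $B_{X'}$ and $E'$ share components, I subtract the common part from both. The lc-center statement follows from Theorem \ref{equidimensional-reduction}(c) together with the fact that, for a log resolution, lc centers of $(W,B_W,\mathbf{M})$ map to g-lc centers of $(X,B,\mathbf{M})$.

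Next I show $\mathbf{M}_{X'}\equiv f'^*L'$. Since $\mathbf{M}_{X'}$ is nef and $\mathbf{M}_X|_{X_\eta}\equiv0$, the restriction of $\mathbf{M}_{X'}$ to the generic fiber of $f'$ is nef and numerically trivial, because numerical triviality on $X_\eta$ pulls back. I then want $\mathbf{M}_{X'}\equiv_{Z'}0$. This is the main obstacle. A nef divisor that is trivial on the general fibre need not be trivial on special fibres. Equidimensionality over a smooth base is exactly what is needed here. For each prime divisor $P\subset Z'$, every component of $f'^*P$ dominates $P$, so a nef divisor trivial over the generic point should be numerically trivial over codimension-one points. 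I expect to use Lehmann's result \cite{Leh15}: $\mathbf{M}_{X'}$ is nef, hence pseudo-effective, with relative numerical dimension zero, so it is numerically equivalent to the pullback of a divisor plus a vertical effective correction. That correction can be absorbed via Lemma \ref{vertical to very exceptional}, using that the correction is nef/$Z'$ and so is $\sim_{\mathbb{R},Z'}0$. This yields $\mathbf{M}_{X'}\equiv f'^*L'$ with $L'$ $\mathbb{R}$-Cartier since $Z'$ is smooth. If a step here needs $\mathbf{M}_{X'}\equiv_{Z'}0$ first, I argue it by noting that $\mathbf{M}_{X'}$ is nef/$Z'$ and vertical-up-to-numerical-equivalence, and then apply the last part of Lemma \ref{vertical to very exceptional}.

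Finally, from $K_X+B+\mathbf{M}_X\equiv_Z0$ and $\mathbf{M}_{X'}\equiv_{Z'}0$ I get $K_{X'}+B_{X'}\equiv_{Z'}E'$. Because $E'\ge0$ is $\phi$-exceptional, Lemma \ref{LX22, 2.3}(2),(1) give
$$\kappa_\sigma(X'/Z',K_{X'}+B_{X'})=\kappa_\sigma(X'/Z',\phi^*(K_X+B+\mathbf{M}_X)+E'-\mathbf{M}_{X'}).$$
Modulo $\equiv_{Z'}$ this equals $\kappa_\sigma(X/Z,K_X+B+\mathbf{M}_X)$, and that is $0$ because the divisor is $\equiv_Z0$. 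This completes the argument.
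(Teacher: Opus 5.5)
Your architecture matches the paper's: equidimensional reduction through a log resolution on which $\mathbf{M}$ descends, then show $\mathbf{M}_{X'}$ is numerically a pullback, then compute $\kappa_\sigma$ via Lemma~\ref{LX22, 2.3}. The first and last steps are essentially fine. The gap is the step you yourself call the main obstacle, which is not proved.

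The result of \cite{Leh15} that the paper uses (Theorem 1.2) assumes a pseudo-effective divisor that is \emph{already} numerically trivial over the base, and concludes that it is numerically a pullback. It does not give, on the fixed model $X'\to Z'$, a decomposition $\mathbf{M}_{X'}\equiv f'^*D+(\text{vertical effective})$ from triviality on the generic fibre alone. You neither state nor check such a version on $X'$ itself. At best a decomposition of that kind comes from a structure result on a birational model of the fibration, where equidimensionality, and with it Lemma~\ref{vertical to very exceptional}, is lost unless you add a push-forward argument. Your fallback, that $\mathbf{M}_{X'}$ is ``vertical up to numerical equivalence'' over $Z'$, is exactly the missing content. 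Numerical triviality on $X'_\eta$ does not by itself produce a vertical $G'$ with $\mathbf{M}_{X'}\equiv_{Z'}G'$. Your remark about components of $f'^*P$ only covers codimension-one points of $Z'$, while $f'$-contracted curves also lie over points of higher codimension.

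\textbf{How the paper fills the gap.} It uses an ingredient absent from your plan.
\begin{itemize}
\item Since $X$ is $\mathbb{Q}$-factorial and $\mathbf{M}$ is b-nef, $(X,B)$ is lc. Also $(K_X+B)|_{X_\eta}\equiv-\mathbf{M}_X|_{X_\eta}\equiv0$, so by \cite{Gon11} $K_X+B$ is $\mathbb{R}$-linearly trivial on the generic fibre. This gives a vertical $G$ with $\mathbf{M}_X\equiv_Z-(K_X+B)\sim_{\mathbb{R}}G$.
\item Write $\phi^*\mathbf{M}_X=\mathbf{M}_{X'}+F'$ with $F'\ge0$ exceptional. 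Nefness of $\mathbf{M}_{X'}$ on $X'_\eta$ forces $F'$ to be vertical. Your claim that $\mathbf{M}_{X'}|_{X'_\eta}\equiv0$ ``because numerical triviality pulls back'' needs exactly this, since $\mathbf{M}_{X'}\neq\phi^*\mathbf{M}_X$ in general.
\item Hence $G'=\phi^*G-F'$ is vertical, nef over $Z'$, and $\equiv_{Z'}\mathbf{M}_{X'}$. Lemma~\ref{vertical to very exceptional} then gives $\mathbf{M}_{X'}\equiv_{Z'}0$, and only at this point is \cite[Theorem 1.2]{Leh15} applied.
\end{itemize}

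Alternatively, your equidimensionality intuition can be made rigorous directly. $X'$ is klt, hence Cohen--Macaulay, so $f'$ is flat over the smooth $Z'$ by miracle flatness. Then $\mathbf{M}_{X'}\cdot H^{d-1}\cdot[X'_z]$ does not depend on $z$, so it is $0$. Since every fibre component appears in $[X'_z]$ with positive multiplicity, nefness makes $\mathbf{M}_{X'}$ numerically trivial on each component.

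\textbf{A smaller point.} With $B_W=h^{-1}_*B+\mathrm{Exc}(h)$, exceptional divisors of positive g-log discrepancy are lc centres of $(W,B_W)$. So lc centres of $(W,B_W)$ need not map to g-lc centres of $(X,B,\mathbf{M})$, and your justification of that clause fails. Instead take $B_W$ to be the positive part of $B_W^*$, where $K_W+B_W^*+\mathbf{M}_W=h^*(K_X+B+\mathbf{M}_X)$.
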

     \begin{proof}
          By Theorem \ref{equidimensional-reduction}, there is a commutative diagram of projective morphisms
          $$
          \begin{tikzcd}
          X' \arrow[d,swap,"f'"] \arrow[r,"\phi"] & X \arrow[d,"f"]\\
          Z' \arrow[r,swap,"\psi"] & Z
          \end{tikzcd}
          $$
          such that\vspace{-2mm}
          \begin{itemize}
               \item $\phi$, $\psi$ are birational, $\phi$ factors through a sufficiently high log resolution of $(X,B,\mathbf{M})$ on which $\mathbf{M}$ descends,
               $f'$ is an equidimensional contraction, $X'$ is $\mathbb{Q}$-factorial and klt, $Z'$ is a smooth projective variety, and\vspace{-3mm}
               \item there exist two $\mathbb{R}$-divisors $B_{X'}$ and $E'$ on $X'$ such that\vspace{-2mm}
               \begin{itemize}
                    \item[-] $K_{X'}+B_{X'}+\mathbf{M}_{X'}=\phi^*(K_X+B+\mathbf{M}_X)+E'$,\vspace{-1mm}
                    \item[-] $B_{X'}\ge0$, $E'\ge0$ is exceptional over $X$, $B_{X'}$ and $E'$ have no common components, and\vspace{-1mm}
                    \item[-] $(X',B_{X'},\mathbf{M})$ is g-lc, and the image of any g-lc center of $(X',B_{X'},\mathbf{M})$ on $X$ is a g-lc center of $(X,B,\mathbf{M})$.\vspace{-2mm}
               \end{itemize} 
          \end{itemize}
          By the negativity lemma, we can write $\phi^*\mathbf{M}_X=\mathbf{M}_{X'}+F'$ where $F'\ge0$ is exceptional over $X$.
          \par We claim that $F'$ is vertical over $Z$. Denote by $X'_\eta$ the generic fiber of $f\circ\phi:X'\rightarrow Z$.
          We can assume $\textup{dim}X>\textup{dim}Z$ so that $\textup{dim}X'_\eta>0$.
          Assume to the contrary that $F'$ is horizontal over $Z$.
          Then we have $\textup{Supp}F'\bigcap X'_\eta\neq\emptyset$.
          Moreover, since $F'$ is exceptional over $X$, $F'|_{X'_\eta}$ is a non-zero effective divisor on $X'_\eta$.
          Hence, there is a curve $C'$ on $X'_\eta$ such that $F'\cdot C'>0$.
          On the other hand, since $\mathbf{M}_X|_{X_\eta}\equiv0$, we have
          $$0=\phi^*\mathbf{M}_X\cdot C'=\mathbf{M}_{X'}\cdot C'+F'\cdot C',$$
          which is impossible because $\mathbf{M}_{X'}$ is nef and $F'\cdot C'>0$. The claim follows.
          \par Next we show that $\mathbf{M}_{X'}\equiv f'^*L'$ for some $\mathbb{R}$-Cartier divisor $L'$ on $Z'$.
          Since $0\equiv\mathbf{M}_{X}|_{X_\eta}\equiv-(K_X+B)|_{X_\eta}$,
          by \cite{Gon11}, after passing to the algebraic closure of $k(Z)$ we have $-(K_X+B)|_{X_\eta}\sim_{\mathbb{R}}0$.
          So there is a vertical over $Z$ $\mathbb{R}$-divisor $G$ on $X$ such that
          $$\mathbf{M}_{X}\equiv_Z-(K_X+B)\sim_{\mathbb{R}}G.$$
          Therefore we have $\mathbf{M}_{X'}\equiv_Z G'$ where $G'=\phi^*G-F'$ is a vertical over $Z$ $\mathbb{R}$-divisor on $X'$.
          Since $f'$ is equidimensional and $Z'$ is smooth, by Lemma \ref{vertical to very exceptional}
          we have $\mathbf{M}_{X'}\equiv_{Z} G'\sim_{\mathbb{R},Z'}0$. The claim then follows by \cite[Theorem 1.2]{Leh15}.
          \par Now, we have
          $$K_{X'}+B_{X'}\equiv_{Z'}K_{X'}+B_{X'}+\mathbf{M}_{X'}=\phi^*(K_X+B+\mathbf{M}_X)+E'.$$
          Since $K_X+B+\mathbf{M}_X\equiv_Z0$, we see that $K_{X'}+B_{X'}\equiv_{Z'}E'$.
          Moreover, we have
          \begin{align*}
               \kappa_\sigma(X'/Z',K_{X'}+B_{X'})&=\kappa_\sigma(X'/Z',K_{X'}+B_{X'}+\mathbf{M}_{X'})\\
               &=\kappa_\sigma(X'/Z,K_{X'}+B_{X'}+\mathbf{M}_{X'})\\
               &=\kappa_\sigma(X/Z,K_X+B+\mathbf{M}_X)=0
          \end{align*}
          where the first and the third equalities follow by Lemma \ref{LX22, 2.3}(1) and (2), respectively; and the second equality is due to the birationality of $\psi$.
     \end{proof}

\begin{lemma}\label{Fil20.5.1}
     Let $(f:X\rightarrow Z,B,\mathbf{M})$ be a numerical g-sub-lc trivial fibration satisfying assumption $(\ast)$ in Definition \ref{dri.mod. part},
     and let $g:X\rightarrow Y$ be a contraction over $Z$.
     Suppose that $(g:X\rightarrow Y,B,\mathbf{M})$ is a numerical g-sub-lc trivial fibration such that
     $(h:Y\rightarrow Z,\mathbf{B}^{\mathbf{Y}}_Y,\mathbf{M^Y})$ is a numerical g-sub-lc trivial fibration, where $\mathbf{B^Y}$ and $\mathbf{M^Y}$
     are the discriminant part and the moduli part of $(g:X\rightarrow Y,B,\mathbf{M})$, respectively. Let
     $\mathbf{M^Z}$ be the moduli part of $(f:X\rightarrow Z,B,\mathbf{M})$ and $\mathbf{N}^\mathbf{Z}$ the moduli part of
     $(h:Y\rightarrow Z,\mathbf{B}^{\mathbf{Y}}_Y,\mathbf{M}^\mathbf{Y})$. Then $\mathbf{M^Z}=\mathbf{N^Z}$.
\end{lemma}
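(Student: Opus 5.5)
The plan is to reduce $\mathbf{M^Z}=\mathbf{N^Z}$ to an equality of discriminant parts, with the numerical class $L$ chosen compatibly. Fix an $\mathbb{R}$-Cartier divisor $L$ on $Z$ with $K_Y+\mathbf{B}^{\mathbf{Y}}_Y+\mathbf{M}^{\mathbf{Y}}_Y\equiv h^*L$; it exists because $(h:Y\rightarrow Z,\mathbf{B}^{\mathbf{Y}}_Y,\mathbf{M^Y})$ satisfies $(\ast)$, which is needed to define $\mathbf{N^Z}$. Then
$$K_X+B+\mathbf{M}_X\equiv g^*(K_Y+\mathbf{B}^{\mathbf{Y}}_Y+\mathbf{M}^{\mathbf{Y}}_Y)\equiv g^*h^*L=f^*L,$$
so the same $L$ witnesses $(\ast)$ for $(f:X\rightarrow Z,B,\mathbf{M})$. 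If a different $L_f$ had been chosen for $f$, then $f^*(L-L_f)\equiv0$. Since $f$ is surjective, every curve on $Z$ is dominated by a curve on $X$, and the projection formula gives $L\equiv L_f$. So, up to the numerical ambiguity already built into Definition \ref{dri.mod. part}, we may use $L$ for both fibrations. With this choice, on each model $Z'$ the two moduli traces are $\psi^*L-(K_{Z'}+\cdot)$, and it suffices to prove that the discriminant b-divisor of $f$ equals that of $h$.

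To compare the discriminants, I would first reduce to $Z$ itself. By Remark \ref{rem. crepant}(3),(4), passing to a model $Z'\rightarrow Z$ and the induced fibrations $X'\rightarrow Y'\rightarrow Z'$ (with $Y'$ a high model of the main component of $Y\times_ZZ'$ and $X'$ over $X\times_YY'$) replaces everything by the induced numerical trivial fibrations. The discriminant and moduli parts of $X'\rightarrow Y'$ are the traces of $\mathbf{B^Y}$ and $\mathbf{M^Y}$ on $Y'$. Fix a prime divisor $P$ on $Z$, shrink $Z$ so that $P$ is Cartier, and let $t\in\mathbb{R}$. By Remark \ref{rem. crepant}(5), the fibration $(g:X\rightarrow Y,B+tf^*P,\mathbf{M})$ has discriminant $\mathbf{B^Y}+t\,\overline{h^*P}$ and the same moduli part $\mathbf{M^Y}$. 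It therefore suffices to prove the equivalence
$$(X,B+tf^*P,\mathbf{M})\text{ g-sub-lc over }\eta_P\iff(Y,\mathbf{B}^{\mathbf{Y}}_Y+th^*P,\mathbf{M^Y})\text{ g-sub-lc over }\eta_P.$$
Taking the supremum over $t$ then gives $b_P^f=b_P^h$.

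The direction ``$\Rightarrow$'' is the argument of Remark \ref{rem. crepant}(2), applied on every birational model $Y'\rightarrow Y$: the coefficients of the discriminant on $Y'$ over $P$ are $1-b_{Q'}\le1$. For ``$\Leftarrow$'', suppose $(X,B+tf^*P,\mathbf{M})$ is not g-sub-lc over $\eta_P$. Then some divisor $E$ on a high model $X''$ has log discrepancy $<0$ and its centre maps into $P$. Take a model $Y''\rightarrow Y$ on which the image of $E$ is a divisor $Q$ (after flattening, e.g.\ a resolution of $Y$ making $X''\rightarrow Y''$ map $E$ onto a divisor), and let $X''$ be the induced model over $Y''$. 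Over $\eta_Q$ the sub-pair fails to be sub-lc, so $b_Q<0$ and the coefficient of $Q$ in the trace of the discriminant on $Y''$ exceeds $1$. Hence $(Y,\mathbf{B}^{\mathbf{Y}}_Y+th^*P,\mathbf{M^Y})$ is not g-sub-lc over $\eta_P$. Here I use that the trace of $\mathbf{M^Y}$ on $Y''$ is the moduli part of the induced fibration, so log discrepancies on $Y''$ are read off from $B_{Y''}$.

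I expect the main obstacle to be this second direction. One must check that the generalized log discrepancy of $Q$ with respect to $(Y,\mathbf{B}^{\mathbf{Y}}_Y+th^*P,\mathbf{M^Y})$ really equals $b_Q$. This uses the crepant relation $K_{Y''}+B_{Y''}+\mathbf{M}^{\mathbf{Y}}_{Y''}\equiv$ pullback, which is only numerical here. To handle it I would apply Lemma \ref{bir.tri.pullback} with $Y''\rightarrow Y$, choosing $Y$-models dominated by a $\mathbb{Q}$-factorial modification, or argue divisorially using that $B_{Y''}$ is defined coefficientwise. Once the discriminants agree on every model, the equality of moduli parts follows from the choice of $L$ in the first step.
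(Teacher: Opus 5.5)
Your proposal is correct and follows essentially the same route as the paper. The paper runs Filipazzi's proof of his Lemma 5.1 verbatim, after observing (as you do) that a single $L$ witnesses $(\ast)$ for $f$, $g$ and $h$; this reduces $\mathbf{M^Z}=\mathbf{N^Z}$ to an equality of discriminants, checked prime by prime by comparing the g-sub-lc thresholds of $f^*P$ and $h^*P$ over $\eta_P$.

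The obstacle you anticipate is not actually there. By the construction in Definition \ref{dri.mod. part}, the trace on $Y''$ is defined as $\mathbf{M^Y}_{Y''}:=\psi^*(K_Y+\mathbf{B^Y}_Y+\mathbf{M^Y}_Y)-(K_{Y''}+\mathbf{B^Y}_{Y''})$ for $\psi:Y''\rightarrow Y$. So the crepant relation is an exact equality of divisors, not just a numerical one, and the log discrepancy of $Q$ is literally $b_Q$.
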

     \begin{proof}
          The proof of \cite[Lemma 5.1]{Fil20} works verbatim.
          We note that, if $L$ is the $\mathbb R$-Cartier divisor on $Z$ such that
          $$K_X+B+\mathbf M_X \equiv f^*L,$$
          then by assumption $K_X+B+\mathbf{M}_X\equiv g^*(h^*L)$ and $K_Y+\mathbf{B}^{\mathbf{Y}}_Y+\mathbf{M}^\mathbf{Y}_Y\equiv h^*L$,
          hence the discriminant and moduli parts of $(g:X\rightarrow Y,B,\mathbf{M})$ and $(h:Y\rightarrow Z,\mathbf{B}^{\mathbf{Y}}_Y,\mathbf{M^Y})$ are well defined.
     \end{proof}

\begin{theorem}\label{b-nef}
     Let $(f:X\rightarrow Z,B,\mathbf{M})$ be a projective numerical g-lc trivial fibration.
     Then, there exists an $\mathbb{R}$-Cartier divisor $L$ on $Z$ such that $K_X+B+\mathbf{M}_X\equiv f^*L$.
     Moreover, the moduli part $\mathbf{N}$ is b-nef.
\end{theorem}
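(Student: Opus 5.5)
We argue by induction on the relative dimension $\dim X-\dim Z$, following the strategy sketched in the introduction, and treat the existence of $L$ and the b-nefness of $\mathbf{N}$ together. If $\dim X=\dim Z$, then $f$ is birational. Lemma \ref{bir.tri.pullback} then gives $K_X+B+\mathbf{M}_X=f^*L$. The moduli part is $\mathbf{N}=\overline{\mathbf{M}}$ pushed to $Z$; more precisely, $\mathbf{N}_{Z'}$ is the pushforward of $\mathbf{M}_{X'}$ on any common model. This is b-nef since $\mathbf{M}$ is b-nef and descends to a model that we may take to be birational to $Z'$.

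In general we first make preliminary reductions. Replacing $B$ by $B+f^*G$ for suitable $G$ changes the moduli part by nothing, by Remark \ref{rem. crepant}(5). Passing to a higher model changes nothing, by Remark \ref{rem. crepant}(3). We may therefore take a $\mathbb{Q}$-factorial dlt model over $Z$ and assume that $(X,B,\mathbf{M})$ is a $\mathbb{Q}$-factorial g-lc pair, at least after discarding vertical negative parts. By Remark \ref{rem. crepant}(4) we may also perform generically finite base changes when proving b-nefness. We then split into two cases.

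\textbf{Case 1: $\mathbf{M}_X|_{X_\eta}\equiv0$.} Apply Proposition \ref{trivial moduli part on a equidimensional reduction} to obtain the model $f':X'\to Z'$. It has the following properties: $\mathbf{M}_{X'}\equiv f'^*L'$, $K_{X'}+B_{X'}\equiv_{Z'}E'$ and $\kappa_\sigma(X'/Z',K_{X'}+B_{X'})=0$. Since $f'$ is equidimensional and $Z'$ is smooth, Proposition \ref{ve} applies to $(X',B_{X'})$ with $D^h+D^v$ the decomposition of $E'$. We run a $(K_{X'}+B_{X'})$-MMP$/Z'$; it only contracts components of $E'$ and terminates with $Y$ on which $K_Y+B_Y\equiv_{Z'}0$. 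On $Y$, Theorem \ref{cbf} and Lemma \ref{HX16, 1.6} give $K_Y+B_Y\sim_{\mathbb{R}}f_Y^*(K_{Z'}+B_{Z'}+N^0_{Z'})$ with b-nef moduli part $\mathbf{N}^0$. The trace of $\mathbf{M}$ on $Y$ is still numerically $f_Y^*L'$ by the negativity lemma, since the MMP is over $Z'$ and $\mathbf{M}_{X'}$ is numerically trivial over $Z'$. Hence $K_Y+B_Y+\mathbf{M}_Y\equiv f_Y^*(K_{Z'}+B_{Z'}+N^0_{Z'}+L')$.

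We then compare with $X$. Because $E'$ is exceptional over $X$ and contracted by this MMP, $(Y,B_Y,\mathbf{M})$ is crepant to $(X,B,\mathbf{M})$ away from components that map to divisors on $Z'$ only over exceptional loci. This yields $(\ast)$ on $Z'$. Lemma \ref{bir.tri.pullback}, applied to $\psi$ with $Z$ $\mathbb{Q}$-factorial, descends it to $L$ on $Z$. The moduli part is $\mathbf{N}^0+\overline{L'}$, which is b-nef since $L'$ is nef: $\mathbf{M}_{X'}$ is nef and numerically a pullback.

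\textbf{Case 2: $\mathbf{M}_X|_{X_\eta}\not\equiv0$.} Then $K_X+B\equiv_Z-\mathbf{M}_X$ is not pseudo-effective over $Z$. Running a $(K_X+B)$-MMP$/Z$ with scaling, which is $\mathbf{M}_X$-positive, ends with a Mori fibre space $X''\to Y\to Z$ with $\dim Y<\dim X''$. Since $K+B+\mathbf{M}\equiv_Z0$, every step is crepant for $(X,B,\mathbf{M})$, so by Remark \ref{rem. crepant}(3) we may replace $X$ by $X''$. Because $\rho(X''/Y)=1$ and $\mathbf{M}_{X''}$ is nef, $(X''\to Y,B'',\mathbf{M})$ is a numerical g-lc trivial fibration. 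We pass to a $\mathbb{Q}$-factorial model of $Y$, allowed by Remark \ref{rem. crepant}(4) and since $Y$ is $\mathbb{Q}$-factorial for a Mori fibre space from a $\mathbb{Q}$-factorial variety. Induction, applied with smaller relative dimension for both $X''\to Y$ and $Y\to Z$, gives $L_Y$ and a b-nef $\mathbf{M^Y}$, so that $(Y\to Z,B_Y,\mathbf{M^Y})$ is a numerical g-lc trivial fibration. Induction again gives $L$ on $Z$ with b-nef moduli part. Lemma \ref{Fil20.5.1} identifies this with $\mathbf{N}$.

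The main obstacle is Case 1: checking that numerical triviality of $\mathbf{M}$ over $Z'$ survives the MMP, and that the discriminant part computed on $Y$ agrees with that of $(X,B,\mathbf{M})$ despite the extraction of $E'$. Handling the non-effective, g-sub case of $B$ is a secondary issue; for this we first add $f^*$ of a general divisor.
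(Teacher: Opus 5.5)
Your Case 1 follows the paper's route. The point you flag as the ``main obstacle'' actually resolves cleanly: $K_{X'}+B_{X'}+\mathbf{M}_{X'}-E'=\phi^*(K_X+B+\mathbf{M}_X)\equiv_{Z'}0$, so every step of the MMP is trivial for this divisor. Since $E'$ is contracted, the negativity lemma gives exact crepancy on a common resolution.

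The genuine gap is in Case 2. There you apply the induction hypothesis to the Mori fibre space $g\colon X''\to Y$, saying it has smaller relative dimension. That holds only when $\dim Y>\dim Z$. When $\dim Y=\dim Z$, the fibration $X''\to Y$ has the same relative dimension $d$ as $X\to Z$. This always happens for $d=1$, since $\dim Z\le\dim Y<\dim X''=\dim Z+1$. Moreover, $X''\to Y$ is again in Case 2, because $\mathbf{M}_{X''}\equiv_Y-(K_{X''}+B'')$ is ample over $Y$ and so is not numerically trivial on the generic fibre. Your argument is therefore circular precisely there, and the induction in Case 2 never gets started.

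The missing idea is that the Mori fibre space step needs no induction at all. Since $g$ contracts a $(K_{X''}+B'')$-negative extremal ray with $\rho(X''/Y)=1$, the contraction theorem upgrades $K_{X''}+B''+\mathbf{M}_{X''}\equiv_Y0$ to $K_{X''}+B''+\mathbf{M}_{X''}\sim_{\mathbb{R},Y}0$. Filipazzi's linear formula (Theorem \ref{g-cbf}) applied to $g$ then yields a g-lc $(Y,B_Y,\mathbf{M}^{\mathbf{Y}})$ with b-nef $\mathbf{M}^{\mathbf{Y}}$ and $K_{X''}+B''+\mathbf{M}_{X''}\sim_{\mathbb{R}}g^*(K_Y+B_Y+\mathbf{M}^{\mathbf{Y}}_Y)$.

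The numerical statement is needed only for $h\colon Y\to Z$. If $h$ is birational, use Lemma \ref{bir.tri.pullback}. Otherwise $h$ has relative dimension $<d$ and the induction hypothesis applies. In both cases, finish with Lemma \ref{Fil20.5.1} and Remark \ref{rem. crepant}(3); this is exactly how the paper proceeds.

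Incidentally, you justify that $X''\to Y$ is a numerical g-lc trivial fibration by saying ``$\mathbf{M}_{X''}$ is nef''. That is false in general: the trace of a b-nef b-divisor on a model where it does not descend need not be nef. It is also unnecessary. The conditions $\equiv_Y0$, g-lc over the generic point of $Y$, and $\mathbb{Q}$-factoriality of $Y$ are what is required.
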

     \begin{proof}
          Since $(X,B,\mathbf{M})$ is g-lc over the generic point of $Z$, we can choose some general ample divisors $H$ and $H'$ on $Z$ such that $(X,B+f^*H-f^*H',\mathbf{M})$ is g-lc.
          By Remark \ref{rem. crepant}(5), we can replace $(X,B,\mathbf{M})$ with $(X,B+f^*H-f^*H',\mathbf{M})$ to assume that $(X,B,\mathbf{M})$ is g-lc.
          Furthermore, by Remark \ref{rem. crepant}(3), replacing $(X,B,\mathbf{M})$ with its g-dlt blow-up we can assume that $(X,B,\mathbf{M})$ is $\mathbb{Q}$-factorial g-dlt.
          In particular, $(X,B)$ is $\mathbb{Q}$-factorial dlt.
          Let $X_\eta$ be the generic fiber of $f:X\rightarrow Z$. We distinguish two cases.\vspace{2mm}\\
          \noindent\textbf{Case I:} $\mathbf{M}_X|_{X_\eta}\equiv 0$.
          \par By Proposition \ref{trivial moduli part on a equidimensional reduction} there is a commutative diagram of projective morphisms
          $$
          \begin{tikzcd}
               X' \arrow[d,swap,"f'"] \arrow[r,"\phi"] & X \arrow[d,"f"]\\
               Z' \arrow[r,swap,"\psi"] & Z
          \end{tikzcd}
          $$
          such that
          \begin{itemize}
               \item $\phi$, $\psi$ are birational, $f'$ is an equidimensional contraction, $X'$ is $\mathbb{Q}$-factorial and klt, $Z'$ is a smooth projective variety,\vspace{-3mm}
               \item $\mathbf{M}$ descends to $X'$, and\vspace{-3mm}
               \item there exist two $\mathbb{R}$-divisors $B_{X'}$ and $E'$ on $X'$ such that\vspace{-2mm}
               \begin{itemize}
               \item[-] $K_{X'}+B_{X'}+\mathbf{M}_{X'}=\phi^*(K_X+B+\mathbf{M}_X)+E'$,\vspace{-1mm}
               \item[-] $B_{X'}\ge0$, $E'\ge0$ is exceptional/$X$, and $B_{X'}$ and $E'$ have no common components,\vspace{-1mm}
               \item[-] $(X',B_{X'},\mathbf{M})$ is g-lc, and the image of any g-lc center of $(X',B_{X'},\mathbf{M})$ on $X$ is a g-lc center of $(X,B,\mathbf{M})$, and\vspace{-1mm}
               \item[-] $\mathbf{M}_{X'}\equiv f'^*L'$ for some $\mathbb{R}$-Cartier divisor $L'$ on $Z'$, $K_{X'}+B_{X'}\equiv_{Z'}E'$, and $\kappa_\sigma(X'/Z',K_{X'}+B_{X'})=0$.\vspace{-1mm}
               \end{itemize}
          \end{itemize}
          By applying Proposition \ref{ve} to $f':X'\rightarrow Z'$, we can run a $(K_{X'}+B_{X'})$-MMP/$Z'$
          with scaling of an ample/$Z'$ divisor which contracts $E'$ and terminates with a model
          $\phi':(X',B_{X'})\dashrightarrow (Y',B_{Y'})$ over $Z'$
          with the induced morphism $f_{Y'}:Y'\rightarrow Z'$ on which $K_{Y'}+B_{Y'}\equiv_{Z'}0$.
          $$
          \begin{tikzcd}
          Y'\arrow[dr,swap,"f_{Y'}"] & \arrow[l,swap,dashrightarrow, "\phi'"] X' \arrow[d,swap,"f'"] \arrow[r,"\phi"] & X \arrow[d,"f"]\\
          &Z' \arrow[r,swap,"\psi"] & Z \\
          \end{tikzcd}
          $$
          Theorem \ref{cbf} yields a g-lc pair $(Z',B_{Z'},\mathbf{N})$ such that
          $$K_{Y'}+B_{Y'}\equiv f_{Y'}^*(K_{Z'}+B_{Z'}+\mathbf{N}_{Z'}).$$
          In particular, $\mathbf{N}$ is b-nef.
          We note that $\mathbf{N}$ is induced by the usual g-lc trivial fibration $(Y',B_{Y'})\rightarrow Z'$,
          which, as mentioned in Remark \ref{rem. crepant}(1), is exactly the moduli part of the numerical g-lc trivial fibration $(f_{Y'}:Y'\rightarrow Z',B_{Y'})$.
          Hence, 
          $$K_{Y'}+B_{Y'}+\mathbf{M}_{Y'}\equiv f_{Y'}^*(K_{Z'}+B_{Z'}+L'+\mathbf{N}_{Z'})$$
          where $\mathbf{N}$ is also the moduli part of $(f_{Y'}:Y'\rightarrow Z,B_{Y'},\mathbf{M})$, as mentioned in Remark \ref{rem. crepant}(5).
          \par Let $p:W\rightarrow X$, $q:W\rightarrow X'$, $r:W\rightarrow Y'$ be a common resolution of $X$, $X'$ and $Y'$.
          $$
          \begin{tikzcd}
          &  W \arrow[dl,swap,"r"] \arrow[d,swap,"q"] \arrow[dr,"p"] &\\
          Y'\arrow[dr,swap,"f_{Y'}"] & \arrow[l,swap,dashrightarrow, "\phi'"] X' \arrow[d,swap,"f'"] \arrow[r,"\phi"] & X \arrow[d,"f"]\\
          &Z' \arrow[r,swap,"\psi"] & Z\\
          \end{tikzcd}
          $$
          As $K_{X'}+B_{X'}+\mathbf{M}_{X'}-E'\equiv_Z0$, the MMP is $(K_{X'}+B_{X'}+\mathbf{M}_{X'}-E')$-trivial/$Z'$. Hence, by the negativity
          lemma we have
          $$r^*(K_{Y'}+B_{Y'}+\mathbf{M}_{Y'})=q^*(K_{X'}+B_{X'}+\mathbf{M}_{X'}-E').$$
          Consequently,
          $$K_{Z'}+B_{Z'}+L'+\mathbf{N}_{Z'}\equiv_Z 0.$$
          The first assertion follows from Lemma \ref{bir.tri.pullback}, and the second follows from Remark \ref{rem. crepant}(3)(4).
          \vspace{2mm}\\
          \noindent\textbf{Case II:} $\mathbf{M}_X|_{X_\eta}\not\equiv 0$.
          \par  We will prove the theorem by induction on the relative dimension $d:=\textup{dim}X-\textup{dim}Z$.
          \par In the case where $d=0$, $f$ is birational as it is a contraction, and so the first assertion follows immediately by Lemma \ref{bir.tri.pullback}.
          Furthermore, we have $f_*\mathbf{M}_X=\mathbf{N}_Z$, and so $\mathbf{N}=\mathbf{M}$ is b-nef. From now on we assume that $d>0$.
          \par Since $\mathbf{M}_X$ is pseudo-effective (as it is the pushdown of a nef divisor on a higher birational model),
          $K_X+B\equiv_Z -\mathbf{M}_X$ is not pseudo-effective/$Z$, and so we can run a $(K_X+B)$-MMP/$Z$ with scaling of an ample/$Z$ divisor which terminates with a Mori fiber space $g:X''\rightarrow Y$ over $Z$
          such that $K_{X''}+B''+\mathbf{M}_{X''}\equiv_Y 0$. Moreover, the contraction theorem yields $K_{X''}+B''+\mathbf{M}_{X''}\sim_{\mathbb{R},Y}0$.
          Therefore, by applying Theorem \ref{g-cbf} to $g:X''\rightarrow Y$, we induce a g-lc pair $(Y,B_Y,\mathbf{M}^\mathbf{Y})$
          such that $K_{X''}+B''+\mathbf{M}_{X''}\sim_\mathbb{R}g^*(K_Y+B_Y+\mathbf{M}^\mathbf{Y}_Y)$.
          \par We note that by construction $(X'',B'',\mathbf{M})$ is g-lc over the generic point of $Y$ and $Y$ is $\mathbb{Q}$-factorial,
          thus we naturally have a numerical g-lc trivial fibration $(g:X''\rightarrow Y,B'',\mathbf{M})$.
          Denote by $h:Y\rightarrow Z$ the induced contraction.
          By construction $(Y,B_Y,\mathbf{M}^\mathbf{Y})$ is g-lc over the generic point of $Z$,
          and since $K_{X''}+B''+\mathbf{M}_{X''}\equiv_Z0$ and $K_{X''}+B''+\mathbf{M}_{X''}\sim_\mathbb{R}g^*(K_Y+B_Y+\mathbf{M}^\mathbf{Y}_Y)$, we have $K_Y+B_Y+\mathbf{M}^\mathbf{Y}_Y\equiv_Z0$.
          Therefore, we have a numerical g-lc trivial fibration $(h:Y\rightarrow Z,B_Y,\mathbf{M}^\mathbf{Y})$.
          \par In the case where $\textup{dim}Y=\textup{dim}Z$, $h$ is birational. Hence, by Lemma \ref{bir.tri.pullback} there is an $\mathbb{R}$-Cartier divisor $L$ on $Z$ such that
          $$K_Y+B_Y+\mathbf{M}^\mathbf{Y}_Y=h^*L,$$
          and so
          $$K_{X''}+B''+\mathbf{M}_{X''}\sim_\mathbb{R}g^*h^*L.$$
          Consequently, it makes sense to consider the discriminant and moduli parts of $(X''\rightarrow Z,B'',\mathbf{M})$ and $(h:Y\rightarrow Z,B_Y,\mathbf{M}^\mathbf{Y})$.
          By the same reason as in the beginning of the proof,
          the moduli part of $(h:Y\rightarrow Z,B_Y,\mathbf{M}^\mathbf{Y})$ is b-nef.
          Moreover, by Lemma \ref{Fil20.5.1}, the moduli part of $(h:Y\rightarrow Z,B_Y,\mathbf{M}^\mathbf{Y})$ is exactly the moduli part of $(X''\rightarrow Z,B'',\mathbf{M})$.
          Take a common log resolution $p:W\rightarrow X$, $q:W\rightarrow X''$.
          $$
          \begin{tikzcd}
          &  W \arrow[dl,swap,"p"] \arrow[dr,"q"] &\\
          X \arrow[ddr,swap,"f"] \arrow[rr, dashrightarrow] & & X'' \arrow[d,"g"]\\
          & & Y \arrow[dl,"h"]\\
          & Z & 
          \end{tikzcd}
          $$
          Since the above $(K_X+B)$-MMP is $(K_X+B+\mathbf{M}_X)$-trivial over $Z$ (as $K_X+B+\mathbf{M}_X\equiv_Z0$) and $K_{X''}+B''+\mathbf{M}_{X''}$ is the birational transform of $K_X+B+\mathbf{M}_X$,
          by the negativity lemma we get
          $$p^*(K_X+B+\mathbf{M}_X)=q^*(K_{X''}+B''+\mathbf{M}_{X''}).$$
          Therefore by Remark \ref{rem. crepant}(3) we are done.
          In particular, as $\textup{dim}X>\textup{dim}Y$, this proves the case when $d=1$.
          \par Thus, we can assume that $\textup{dim}Y>\textup{dim}Z$.
          In this case, $(h:Y\rightarrow Z,B_Y,\mathbf{M}^\mathbf{Y})$ is a numerical g-lc trivial fibration with relative dimension less than $d$.
          Hence, by induction there is an $\mathbb{R}$-Cartier divisor $L$ on $Z$ such that
          $$K_Y+B_Y+\mathbf{M}^\mathbf{Y}_Y=h^*L,$$
          and the moduli part of $(h:Y\rightarrow Z,B_Y,\mathbf{M}^\mathbf{Y})$ is b-nef.
          By arguing as in the previous paragraph we prove the inductive step.
     \end{proof}

\begin{proof}[Proof of Theorem \ref{n-g-cbf}]
     Under the assumptions, we have a numerical g-lc trivial fibration $(f:X\rightarrow Z,B,\mathbf{M})$.
     The first assertion follows directly from Theorem \ref{b-nef} and the construction in Definition \ref{dri.mod. part}.
     The ``Moreover'' part follows from Remark \ref{rem. crepant}(2).
\end{proof}

\vspace{5mm}

\phantomsection     
\addcontentsline{toc}{section}{References}
\renewcommand\refname{\textsc{\large{References}}}

\noindent \textsc{Cheng Zhang}\\
\textsc{School of Mathematical Sciences}\\
\textsc{Xiamen University}\\
\textsc{Xiamen, 361001, China}\\
\itshape{Email address}: \textup{chengz23@stu.xmu.edu.cn}

\end{document}